\def\MR#1{}
\newtheorem{theorem}{Theorem}[section]
\newtheorem{lemma}[theorem]{Lemma}
\newtheorem{proposition}[theorem]{Proposition}
\newtheorem{corollary}[theorem]{Corollary}
\theoremstyle{definition}
\newtheorem{definition}[theorem]{Definition}
\newtheorem{remark}{Remark}[section]
\numberwithin{equation}{section}
\subjclass[2010]{Primary: 40A05; Secondary: 40A30.}
\keywords{ Taubrerian Conditions, Geodesic Metric Space, Mean, Almost Convergence, Almost Periodicity.}
\begin{document}
\noindent {\footnotesize\tiny}\\[1.00in]
\textcolor[rgb]{0.00,0.00,1.00}{}
\thanks{$^*$Corresponding author}
\title[]{Tauberian Conditions for Almost Convergence in a Geodesic Metric Space}
\maketitle
\begin{center}
	{\sf Hadi Khatibzadeh$^{1}$ and Hadi Pouladi$^{*2}$}\\
	{\footnotesize{\it $^{1,2}$ Department of Mathematics, University
			of Zanjan, P. O. Box 45195-313, Zanjan, Iran.\\ Email:$^1$hkhatibzadeh@znu.ac.ir, $^{2}$hadi.pouladi@znu.ac.ir.}}
\end{center}
\begin{abstract}
In the present paper, after recalling the Karcher mean in Hadamard spaces, we study the relation between convergence, almost convergence and mean convergence (respect to the defined mean) of a sequence in Hadamard spaces. These results extend Tauberian conditions from Banach spaces to Hadamard spaces. Also, we show that every almost periodic sequence in Hadamard spaces is almost convergent.
\end{abstract}
\section{Introduction}
For a sequence $\{x_n\}$ in a linear space, the Vallee-Poussin means of the sequence for each $k$ is defined by:
$$ \frac{1}{n}\displaystyle\sum_{i=0}^{n-1} x_{k+i}, $$
(see \cite{Butzer1993}) and the Cesaro mean by:
$$ \frac{1}{n}\displaystyle\sum_{i=0}^{n-1} x_i. $$
\noindent Lorentz \cite{lorentz1948} defined the concept of {\it almost convergence} for a bounded sequence $\{x_n\}$ of real (or complex) scalars using two approaches. The first approach is using Banach limits, and the second one is based on the uniform convergence of the above Vallee-Poussin means respect to $k$. In fact, he showed that  these two approaches are equivalent. Clearly convergence of sequence $\{x_n\}$ implies almost convergence of the sequence and almost convergence implies convergence of the Cesaro mean of the sequence, i.e.,
\begin{equation*}
x_n\underset{n}{\longrightarrow} y \quad \Longrightarrow \quad \frac{1}{n}\displaystyle\sum_{i=0}^{n-1} x_{k+i} \underset{n}{\longrightarrow} y ~\text{uniformly in}~k \quad \Longrightarrow \quad \frac{1}{n}\displaystyle\sum_{i=0}^{n-1} x_i \underset{n}{\longrightarrow} y
\end{equation*}
For the reverse directions we need some sufficient conditions, which are called the Tauberian conditions and considered by Lorentz \cite{lorentz1948} for scalar sequences.  Then Kuo \cite{Kuo2009} extended the Tauberian conditions from real sequences to vector sequences in Banach spaces. In this paper, after the definition of mean and almost convergence for a sequence in a Hadamard space, we extend the Tauberian conditions in this setting.

In the next section, we present some preliminaries including the basic concepts of Hadamard spaces and the required lemmas to state and prove the main results. We also define the Karcher mean as well as the concept of the ergodic and almost convergence of a sequence respect to this mean. In Sections 3 and 4, Tauberian theorems are studied respectively for metric and weak convergence. Finally, Section 5 is devoted to prove almost convergence of almost periodic sequences in Hadamard spaces.

\section{Preliminaries}
In metric space $(X,d)$, a geodesic between two points $x,y\in X$ is a map
$$ \gamma :[0,d(x,y)]\longrightarrow X, $$
such that $\gamma(0)=x,\  \gamma\big(d(x,y)\big)=y \ \text{and} \  d\big(\gamma(t),
\gamma(t')\big)=|t-t'|, \quad \forall t,t'\in[0,1]$.
The metric space that every it's two points are joined by a geodesic, is said geodesic space and it is said uniquely geodesic if between any two points there is exactly one geodesic. The image of $\gamma$ is called a geodesic segment and denoted by $[x,y]$ for uniquely geodesic spaces, also in such spaces $m_t=(1-t)x\oplus ty$ for every $t\in[0,1]$ is the unique point in the segment $[x,y]$ such that:\\
\centerline{$d(x,m_t)=td(x,y)$ and $d(y,m_t)=(1-t)d(x,y)$.}

A geodesic triangle $\triangle:=\triangle(x_1,x_2,x_3)$ in a geodesic metric space $(X,d)$ consists of three points $x_1, x_2, x_3\in X$ as vertices and three geodesic segments joining each pair of vertices as edges. A comparison triangle for the geodesic triangle $\triangle$ is the triangle $\overline{\triangle}:=\overline{\triangle}(x_1,x_2,x_3):=\triangle(\overline{x_1},\overline{x_2},\overline{x_3})$ in the Euclidean space $\Bbb{R}^2$ such that $d(x_i,x_j)=d_{\Bbb{R}^2}(\overline{x_i},\overline{x_j})$ for all $i,j=1,2,3$. A geodesic space $X$ is said to be a $CAT(0)$ space if for each geodesic triangle $\triangle$ in $X$ and its comparison triangle $\overline{\triangle}$ in $\Bbb{R}^2$, the $CAT(0)$ inequality
\begin{equation*}
d(x,y)\leqslant d_{\Bbb{R}^2}(\overline{x},\overline{y}),
\end{equation*}
is satisfied for all $x,y\in \triangle$ and all comparison points $\overline{x}, \overline{y}\in \overline{\triangle}$. 

A $CAT(0)$ space $(X,d)$ is a geodesic metric space which satisfies the $CN-$inequality 
\begin{equation}\label{cat0ineq}
d^2(x, m)\leqslant \frac{1}{2}d^2(x,y)+\frac{1}{2}d^2(x,z)-\frac{1}{4}d^2(y,z),
\end{equation}
where $x,y,z\in X$ and $m$ is the midpoint of the segment $[y,z]$, i.e.,\linebreak $d(m,y)=d(m,z)=\frac{1}{2}d(z,y)$ \cite{bridson2011metric}. Also in \cite[Lemma 2.5]{DHOMPONGSA20082572} and \cite[page 163]{bridson2011metric}, we find out that a geodesic metric space is a $CAT(0)$ space if and only if for every three points $x_0 ,x_1, y \in X$ and for every $0<t<1$
\allowdisplaybreaks\begin{equation}
d^2(y, x_t)\leqslant(1-t)d^2(y,x_0)+td^2(y,x_1)-t(1-t)d^2(x_0,x_1),
\end{equation}
where $x_t=(1-t)x_0\oplus tx_1$ for every $t\in[0,1]$, the above inequality is known as strong convexity of the function $d^2$ respect to each argument. A $CAT(0)$ space is uniquely geodesic. A complete $CAT(0)$ space is said Hadamard space. From now, we denote every Hadamard space by $\mathscr H$. In Hadamard space any nonempty closed convex subset $S$  is Chebyshev i.e., $P_Sx=\{s\in S: d(x,S)=d(x,s) \}$ is singleton, where $d(x,S):=\displaystyle\inf_{s\in S} d(x,s)$\cite{bacak2014convex}. Thus, the metric projection on nonempty closed convex subset $S$ of a Hadamard space $\mathscr H$  is the following map:
\begin{equation*}
P:\mathscr H\longrightarrow S \quad x\mapsto P_Sx,
\end{equation*}
where $P_Sx$ is the nearest point of $S$ to $x$ for all $x\in \mathscr H$. A well-known fact implies that
\begin{equation}\label{projection}
d^2(x,P_Sx)+d^2(P_Sx,y)\leqslant d^2(x,y),\ \forall y\in S
\end{equation}
(see \cite{bacak2014convex} and also \cite{dehghan2012}).

Let $(X,d)$ be a $CAT(0)$ space, a function $f:X\longrightarrow \Bbb R$ is said to be convex if for all $x,y\in X$ and for all $\lambda\in [0,1]$
\begin{equation*}
f\big((1-\lambda )x\oplus\lambda y\big)\leqslant (1-\lambda)f(x)+\lambda f(y),
\end{equation*}
clearly the metric function $d$ on $CAT(0)$ space $X$ is convex. Also, $f$ is said to be $\gamma$-strongly convex with $\gamma>0$ if for all $x,y\in \mathscr H$
\begin{equation*}
f\big(\lambda x\oplus (1-\lambda)y\big)\leqslant \lambda f(x)+(1-\lambda)f(y)-\lambda(1-\lambda)\gamma d^2(x,y).
\end{equation*}
Clearly by definition of $CAT(0)$ space, the metric function $d^2$ on $CAT(0)$ space $X$ is $\gamma$-strongly convex respect to each argument with $\gamma=1$. A function $f: X\longrightarrow \mathbb{R}$ is said to be lower semicontinuous (shortly, lsc) if the set $\{x\in X : f(x)\leqslant \alpha\}$ is closed for all $\alpha\in \Bbb R$. Any lsc, strongly convex function in a Hadamard space has a unique minimizer \cite{bacak2014convex}.\\
The following lemma contains some inequalities that are satisfied in any Hadamard space, and we use them in the next section.

\begin{lemma}$($\cite{CHAOHA2006983,papadopoulos2005}$)$.\label{l2}
Let $(X,d)$ be a $CAT(0)$ space. Then for all $x,y,z\in X$ and\linebreak $t,s\in[0,1]$; we have:\\
$1)\quad d\big((1-t)x\oplus ty,z\big)\leqslant (1-t)d(x,z)+td(y,z)$.\\
$2)\quad d\big((1-t)x\oplus ty,(1-s)x\oplus sy\big)= |t-s|d(x,y)$.\\
$3)\quad d\big((1-t)z\oplus tx,(1-t)z\oplus ty\big)\leqslant td(x,y).$
\end{lemma}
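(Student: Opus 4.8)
The plan is to verify the three inequalities one at a time, using only the $CAT(0)$ comparison axiom together with the uniqueness of geodesics; completeness plays no role, so it suffices to argue in an arbitrary $CAT(0)$ space $(X,d)$. Items $(1)$ and $(3)$ will be reduced to elementary facts about the Euclidean plane via comparison triangles, while $(2)$ will follow from the arc-length parametrization of the (unique) geodesic segment $[x,y]$.

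For part $(2)$, I would let $\gamma:[0,d(x,y)]\to X$ be the arc-length parametrization of $[x,y]$ with $\gamma(0)=x$ and $\gamma(d(x,y))=y$. Since $X$ is uniquely geodesic, the characterization $d(x,m_t)=t\,d(x,y)$, $d(y,m_t)=(1-t)d(x,y)$ of the point $m_t=(1-t)x\oplus ty$ forces $m_t=\gamma\big(t\,d(x,y)\big)$, and similarly $(1-s)x\oplus sy=\gamma\big(s\,d(x,y)\big)$. As $\gamma$ is an isometric embedding,
$$ d\big((1-t)x\oplus ty,(1-s)x\oplus sy\big)=d\big(\gamma(t\,d(x,y)),\gamma(s\,d(x,y))\big)=|t-s|\,d(x,y). $$

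For part $(1)$, I would take the geodesic triangle $\triangle(x,y,z)$ and a comparison triangle $\overline\triangle(\overline x,\overline y,\overline z)$ in $\mathbb R^2$. The comparison point of $m_t=(1-t)x\oplus ty$ is $\overline{m_t}=(1-t)\overline x+t\overline y$, so the $CAT(0)$ inequality gives $d(m_t,z)\le d_{\mathbb R^2}(\overline{m_t},\overline z)$, and by convexity of the Euclidean norm $d_{\mathbb R^2}(\overline{m_t},\overline z)=|(1-t)\overline x+t\overline y-\overline z|\le(1-t)|\overline x-\overline z|+t|\overline y-\overline z|=(1-t)d(x,z)+t\,d(y,z)$; this is just the convexity of the metric already noted in the preliminaries. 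For part $(3)$, I would instead use a comparison triangle for $\triangle(z,x,y)$: the comparison points of $(1-t)z\oplus tx$ and $(1-t)z\oplus ty$ are $(1-t)\overline z+t\overline x$ on $[\overline z,\overline x]$ and $(1-t)\overline z+t\overline y$ on $[\overline z,\overline y]$, whose Euclidean distance equals $|t\overline x-t\overline y|=t\,d_{\mathbb R^2}(\overline x,\overline y)=t\,d(x,y)$, and the $CAT(0)$ inequality bounds $d\big((1-t)z\oplus tx,(1-t)z\oplus ty\big)$ by this number.

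I do not expect a genuine obstacle: each part collapses to a one-line computation in $\mathbb R^2$ plus an application of the comparison axiom. The only points requiring attention are bookkeeping ones — for $(1)$ and $(3)$ one must use different comparison triangles (based at $z$ versus based at a vertex on $[x,y]$) and keep the comparison points straight — and the observation that part $(2)$ is the single place where unique geodesicity is genuinely used, since that is what identifies $(1-t)x\oplus ty$ with the arc-length point $\gamma(t\,d(x,y))$.
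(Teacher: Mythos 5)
Your proof is correct: part (2) follows exactly as you say from the arc-length parametrization and unique geodesicity, and parts (1) and (3) are the standard comparison-triangle arguments, with the comparison points correctly identified as $(1-t)\overline{x}+t\overline{y}$, $(1-t)\overline{z}+t\overline{x}$ and $(1-t)\overline{z}+t\overline{y}$. The paper itself gives no proof of this lemma --- it is quoted from the cited references \cite{CHAOHA2006983,papadopoulos2005} --- and your argument is essentially the standard one found there, so there is nothing further to compare.
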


Berg and Nikolaev in \cite{Berg2008} introduced the notion of quasilinearization that is the map $\langle \cdot,\cdot\rangle:(X\times X)\times (X\times X)\longrightarrow \Bbb R$ defined by
\begin{equation}\label{e12}
\langle\overset{\rightarrow}{ab},\overset{\rightarrow}{cd}\rangle=\frac{1}{2}\big\{d^2(a,d)+d^2(b,c)-d^2(a,c)-d^2(b,d) \big\} \quad a,b,c,d\in X,
\end{equation}
where a vector $\overset{\rightarrow}{ab}$ or $ab$ denotes a pair $(a,b)\in X\times X$. Also in \cite{Berg2008} they proved that a geodesically connected metric space is a $CAT(0)$ space if and only if it satisfies the Cauchy-Schwarz inequality as:
\begin{equation*}
\langle ab, cd\rangle\leqslant d(a,b)d(c,d) \quad (a,b,c,d\in X).
\end{equation*}

Now we define the notion of $\triangle-$convergence in $CAT(0)$ spaces that is weaker than the convergence respect to metric and it is an alternative of weak convergence in these spaces.\\
In a $CAT(0)$ space $X$, for a bounded sequence $\{x_n\}$ if for $x\in X$ we set\linebreak $r(x,\{x_n\})=\displaystyle\limsup_{n\rightarrow \infty} d(x,x_n)$, the asymptotic radius of $\{x_n\}$ is defined as follows:
$$r(\{ x_n\} )=\inf \big\{ r(x,\{ x_n\} ): x\in X \big\},$$
and the asymptotic center is the set
$$ A(\{ x_n\} )=\big\{x\in X: r(x,\{ x_n\} )= r(\{ x_n\} ) \big\}.$$
It is known that in a Hadamard space, $A(\{ x_n\} )$ is singleton\cite{KIRK20083689}. The notion of\linebreak $\triangle-$convergence first introduced by Lim\cite{lim1976} as follows.
\begin{definition}\label{weakcondef1}
A sequence $\{ x_n\}$ is said $\triangle-$convergent to $x$ if $x$ is the unique asymptotic center of $\{ x_{n_j}\}$ for every subsequence $\{ x_{n_j}\}$ of $\{ x_n\}$. The point $x$ is said $\triangle-\lim$ of $\{ x_n\}$ and denoted as $\bigtriangleup-\displaystyle\lim_n x_n=x$ or $x_n\overset{\bigtriangleup}{\longrightarrow}x$.
\end{definition}
\begin{lemma}$($see \cite{KIRK20083689}$)$.\label{lconsubseq}
Every bounded sequence in $CAT(0)$ space has a $\bigtriangleup -convergent$ subsequence. Also every closed convex subset of a Hadamard space is $\bigtriangleup-$closed in the sense that it contains all $\bigtriangleup-\lim$ points of every $\bigtriangleup$-convergent subsequence. 
\end{lemma}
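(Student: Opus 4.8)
The plan is to establish the two assertions separately, both through the same device: exhibiting a point whose $\limsup$-distance to the sequence is strictly smaller than the asymptotic radius, which contradicts the definition of the latter as an infimum.

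\emph{Existence of a $\bigtriangleup$-convergent subsequence.} Let $\{x_n\}$ be a bounded sequence in $X$. First I would extract a \emph{regular} subsequence $\{y_k\}$, meaning one for which $r(\{z_j\})=r(\{y_k\})$ for \emph{every} subsequence $\{z_j\}$ of $\{y_k\}$. This is the routine part: since the map $\{u_n\}\mapsto r(\{u_n\})$ is non-increasing under passage to subsequences and bounded below by $0$, a nested diagonal construction (at step $k$ choose a subsequence of the previous one whose asymptotic radius exceeds the infimum of the radii of all its subsequences by at most $2^{-k}$) produces such a $\{y_k\}$; call the common value $\rho$. Let $A(\{y_k\})=\{y\}$, a singleton because the asymptotic center of a bounded sequence in a Hadamard space is a singleton. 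I claim $y_k\overset{\bigtriangleup}{\longrightarrow}y$. Fix a subsequence $\{z_j\}$ of $\{y_k\}$ and let $A(\{z_j\})=\{z\}$; it suffices to show $z=y$. Since $\{z_j\}$ is a subsequence of $\{y_k\}$ we have $r(y,\{z_j\})\le r(y,\{y_k\})=\rho$, and $r(z,\{z_j\})=r(\{z_j\})=\rho$ by regularity. Applying the $CN$-inequality \eqref{cat0ineq} to $z_j$, $y$, $z$, with $m$ the midpoint of $[y,z]$, and letting $j\to\infty$, gives $r(m,\{z_j\})^2\le\rho^2-\tfrac14 d^2(y,z)$. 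If $y\neq z$ this yields $r(m,\{z_j\})<\rho=r(\{z_j\})$, contradicting that $r(\{z_j\})$ is the infimum of $r(\cdot,\{z_j\})$ over all points of $X$. Hence $y=z$, so every subsequence of $\{y_k\}$ has asymptotic center $y$, i.e.\ $\{y_k\}$ is $\bigtriangleup$-convergent to $y$.

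\emph{$\bigtriangleup$-closedness of closed convex sets.} Let $C$ be a nonempty closed convex subset of $\mathscr H$ and let $\{x_n\}\subseteq C$ with $x_n\overset{\bigtriangleup}{\longrightarrow}x$ (if one is only given a $\bigtriangleup$-convergent subsequence lying in $C$, run the argument on that subsequence). Since $C$ is Chebyshev, set $p:=P_Cx$. The projection inequality \eqref{projection} with $y=x_n\in C$ gives $d^2(p,x_n)\le d^2(x,x_n)-d^2(x,p)$ for every $n$; taking $\limsup$ in $n$ yields $r(p,\{x_n\})^2\le r(x,\{x_n\})^2-d^2(x,p)$. But $x_n\overset{\bigtriangleup}{\longrightarrow}x$ forces $A(\{x_n\})=\{x\}$, so $r(x,\{x_n\})=r(\{x_n\})\le r(p,\{x_n\})$; combining the last two inequalities forces $d^2(x,p)\le0$, whence $x=p\in C$.

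I do not expect a genuine obstacle. The one piece of careful bookkeeping is the diagonal construction of the regular subsequence; the conceptual content of both parts is the same, namely that strong convexity of $d^2$---used through the $CN$-inequality in the first part and through \eqref{projection} in the second---turns a hypothetical ``wrong'' limit point into a strictly better asymptotic center, which cannot exist.
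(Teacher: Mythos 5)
Your proposal is correct. Note that the paper offers no argument of its own for this lemma: it is quoted from Kirk and Panyanak \cite{KIRK20083689}, so there is no internal proof to compare against. What you wrote is essentially a self-contained reconstruction of the standard argument from that literature: extract a regular subsequence by the Goebel-type diagonal procedure, then use the $CN$-inequality \eqref{cat0ineq} to show that a regular sequence $\triangle$-converges to its (unique) asymptotic center, and use the projection inequality \eqref{projection} to get $\triangle$-closedness of closed convex sets. Both halves check out: in the first part, regularity gives $r(\{z_j\})=\rho$, so the midpoint estimate $r(m,\{z_j\})^2\leqslant\rho^2-\tfrac14 d^2(y,z)$ indeed contradicts the definition of the asymptotic radius as an infimum unless $y=z$; in the second part, combining $d^2(p,x_n)\leqslant d^2(x,x_n)-d^2(x,p)$ with $r(x,\{x_n\})\leqslant r(p,\{x_n\})$ forces $x=p\in C$. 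Two minor remarks. The diagonal extraction is only sketched, but your outline (monotonicity of the radius under subsequences, choosing at stage $k$ a subsequence whose radius is within $2^{-k}$ of the infimal radius, then diagonalizing and using that deleting finitely many terms does not change asymptotic radii) is exactly the standard lemma and closes without difficulty. Also, both of your steps use that asymptotic centers are singletons, which requires completeness; so your argument proves the statement in Hadamard spaces, which is consistent with the cited reference and with how the lemma is used in the paper, even though the first sentence of the statement says only ``$CAT(0)$ space.''
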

We have two other equivalent definitions for the notion of  $\triangle-$convergence by the next two propositions.
\begin{proposition}\label{weakcondef2}$($\cite[Proposition 5.2]{espinola2009}$)$
A sequence $\{x_n\}$ in a Hadamard space $(\mathscr H,d)$, $\triangle-$converges to $x$ if and only if $\displaystyle\lim_{n\rightarrow\infty}d(x,P_Ix_n)=0$ for all geodesics $I$ issuing from the point $x$, where $P_I:\mathscr H\longrightarrow I$ is the projection map.
\end{proposition}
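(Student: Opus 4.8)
The plan is to derive both implications from two ingredients already recalled above: the projection inequality \eqref{projection} and the strong convexity of $d^2(y,\cdot)$ along geodesics, i.e. $d^2(y,x_t)\le(1-t)d^2(y,x_0)+td^2(y,x_1)-t(1-t)d^2(x_0,x_1)$ for $x_t=(1-t)x_0\oplus tx_1$. Throughout I take $\{x_n\}$ to be bounded, as is implicit once one speaks of $\triangle$-convergence since asymptotic centers are only defined for bounded sequences. Recall that in a Hadamard space $A(\{x_{n_j}\})$ is a singleton and, by Definition \ref{weakcondef1}, $x_n\overset{\triangle}{\longrightarrow}x$ means exactly $A(\{x_{n_j}\})=\{x\}$ for every subsequence; so in each direction it suffices to work with an arbitrary subsequence and compare asymptotic radii.

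For the forward implication I would fix a geodesic $I$ issuing from $x$ and set $y_n:=P_Ix_n$. Feeding $y=x\in I$ into \eqref{projection} gives $d(x,y_n)\le d(x,x_n)$, so $\{y_n\}$ is bounded in $I$ (isometric to a real interval); if $d(x,y_n)\not\to0$, I pass to a subsequence $\{x_{n_j}\}$ with $y_{n_j}\to y\in I$, $y\ne x$. The first key step is the estimate, with $R:=\limsup_j d(x_{n_j},x)$,
\[
\limsup_j d^2(x_{n_j},y)\ \le\ \limsup_j d^2(x_{n_j},y_{n_j})\ \le\ R^2-d^2(x,y),
\]
which follows from $d(x_{n_j},y)\le d(x_{n_j},y_{n_j})+d(y_{n_j},y)$ together with \eqref{projection} at $p=x$, on letting $j\to\infty$. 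The second key step plugs this into strong convexity along $[x,y]\subseteq I$: for $y^t:=(1-t)x\oplus ty$ with $t\in(0,1)$,
\begin{align*}
\limsup_j d^2(x_{n_j},y^t)&\le(1-t)R^2+t\limsup_j d^2(x_{n_j},y)-t(1-t)d^2(x,y)\\
&\le R^2-t(2-t)\,d^2(x,y)\ <\ R^2.
\end{align*}
Since $x=A(\{x_{n_j}\})$ we have $R=r(\{x_{n_j}\})$, so $y^t$ would have asymptotic radius strictly below the infimum — a contradiction. Hence $d(x,P_Ix_n)\to0$.

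For the converse I would take an arbitrary subsequence $\{x_{n_j}\}$, let $\{z\}=A(\{x_{n_j}\})$, suppose $z\ne x$, and apply the hypothesis to the geodesic segment $I:=[x,z]$ (which issues from $x$): with $u_j:=P_Ix_{n_j}$ one has $u_j\to x$. Then \eqref{projection} at $p=x$ gives $d(x_{n_j},u_j)\le d(x_{n_j},x)$, whence $\limsup_j d(x_{n_j},u_j)=r(x,\{x_{n_j}\})$ because $u_j\to x$; and \eqref{projection} at $p=z$ gives $d^2(x_{n_j},u_j)+d^2(u_j,z)\le d^2(x_{n_j},z)$, so on letting $j\to\infty$ (using $d(u_j,z)\to d(x,z)$),
\[
r(z,\{x_{n_j}\})^2\ \ge\ r(x,\{x_{n_j}\})^2+d^2(x,z)\ >\ r(x,\{x_{n_j}\})^2,
\]
contradicting that $z$ minimizes the asymptotic radius. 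Thus $z=x$ for every subsequence, i.e. $x_n\overset{\triangle}{\longrightarrow}x$.

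I expect the main obstacle to be the first estimate of the forward direction: one must notice that once $y=\lim_j P_Ix_{n_j}\ne x$ is extracted, \eqref{projection} forces the rather strong bound $\limsup_j d^2(x_{n_j},y)\le R^2-d^2(x,y)$, and it is precisely this surplus $d^2(x,y)$ that lets a point $y^t$ near $x$ on $[x,y]$ undercut the asymptotic radius of $x$; after that it is a one-variable application of strong convexity. Beyond this, only routine care is needed with limsups of sums and products in which one factor converges or tends to $0$, and with keeping the boundedness of $\{x_n\}$ in force so that all asymptotic centers and radii are defined.
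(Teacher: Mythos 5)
Your argument is correct, but note that the paper itself offers no proof of this proposition to compare against: it is quoted verbatim from \cite[Proposition 5.2]{espinola2009} and used as a black box (e.g.\ in Corollaries \ref{tautheokarcher1weak}--\ref{tautheokarcher3weak}), so what you have produced is a self-contained proof of the cited result rather than an alternative to an internal one. Both of your directions are sound: the converse is exactly the expected argument (projection inequality \eqref{projection} at the points $x$ and $z$ of the segment $[x,z]$, plus minimality of the asymptotic radius at the asymptotic center $z$ of an arbitrary subsequence), and the forward direction correctly combines \eqref{projection} with the extraction of a limit $y\neq x$ of the projections, which is legitimate because $\{P_Ix_{n_j}\}$ sits in a geodesic segment, isometric to a compact real interval. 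Two small observations: first, your concluding strong-convexity step with $y^t$ is redundant, since the estimate $\limsup_j d^2(x_{n_j},y)\leqslant R^2-d^2(x,y)<R^2$ already exhibits a point of strictly smaller asymptotic radius than $r(\{x_{n_j}\})=R$, which is the contradiction you need (formally the case $t=1$); second, your explicit boundedness hypothesis in the converse is the right reading of the statement, since asymptotic centers are only defined for bounded sequences and the source formulates the proposition for bounded sequences. Neither point affects correctness.
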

\begin{proposition}\label{weakcondef3}$($\cite[Theorem 2.6]{kakavandi2013}$)$
Let $(X,d)$ be a $CAT(0)$ space, $\{x_n\}$ be a sequence in $X$ and $x\in X$. Then $\{x_n\}$ $\triangle-$converges to $x$ if and only if
\begin{equation*}
\displaystyle\limsup_{n\rightarrow\infty} \langle xx_n,xy \rangle\leqslant 0, \quad \forall y\in X.
\end{equation*}
\end{proposition}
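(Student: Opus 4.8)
The plan is to prove the two implications separately, after recording the identity coming from \eqref{e12},
\begin{equation*}
\langle xx_n,xy\rangle=\frac{1}{2}\big(d^2(x,y)+d^2(x,x_n)-d^2(x_n,y)\big),
\end{equation*}
which reduces everything to manipulations of squared distances. Two remarks will be used throughout. First, a $\triangle$-convergent sequence is automatically bounded — an unbounded sequence has asymptotic radius $+\infty$ at every point, so its asymptotic center is not a single point — hence boundedness is free in the forward direction; in the converse direction boundedness has to be assumed, otherwise the equivalence fails (a sequence escaping to infinity along pairwise distinct geodesic rays issuing from $x$ in a complete $\mathbb{R}$-tree satisfies the right-hand condition while being unbounded). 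Second, for real sequences bounded above one has $\limsup_j(a_j-b_j)\ge\limsup_j a_j-\limsup_j b_j$, and passing to a subsequence cannot increase a $\limsup$; the latter shows the right-hand inequality holds for every subsequence as soon as it holds for $\{x_n\}$.

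To prove $\triangle\text{-}\lim_n x_n=x\,\Rightarrow\,$ the inequality, I would fix $y\in X$ (the case $y=x$ being trivial) and let $I=[x,y]$ be the geodesic segment issuing from $x$, a closed convex set containing $y$, with $P_I$ the metric projection onto it. By Proposition \ref{weakcondef2}, $d(x,P_Ix_n)\to0$. Applying \eqref{projection} with $S=I$, with $x_n$ in the role of the exterior point, and with $y\in I$ gives $d^2(x_n,P_Ix_n)+d^2(P_Ix_n,y)\le d^2(x_n,y)$, hence
\begin{equation*}
\langle xx_n,xy\rangle\le\frac{1}{2}\big(d^2(x,y)+d^2(x,x_n)-d^2(x_n,P_Ix_n)-d^2(P_Ix_n,y)\big).
\end{equation*}
Now $d^2(x,x_n)-d^2(x_n,P_Ix_n)=\big(d(x,x_n)-d(x_n,P_Ix_n)\big)\big(d(x,x_n)+d(x_n,P_Ix_n)\big)\to0$, since the first factor has absolute value at most $d(x,P_Ix_n)\to0$ and the second is bounded ($\{x_n\}$ being bounded), while $d^2(P_Ix_n,y)\to d^2(x,y)$ because $P_Ix_n\to x$. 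Taking $\limsup_n$ in the last display yields $\limsup_n\langle xx_n,xy\rangle\le\frac{1}{2}(d^2(x,y)-d^2(x,y))=0$. (Alternatively one may bypass Proposition \ref{weakcondef2}, start from the strong convexity of $d^2(\cdot,x_n)$ along $[x,y]$ evaluated at $(1-t)x\oplus ty$, and let $t\to0^+$.)

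For the converse, I would assume $\limsup_n\langle xx_n,xy\rangle\le0$ for all $y\in X$ and that $\{x_n\}$ is bounded. Let $\{x_{n_j}\}$ be an arbitrary subsequence; since $\mathscr H$ is Hadamard its asymptotic center is a single point $u$, and it suffices to show $u=x$. Applying the hypothesis to $\{x_{n_j}\}$ with $y=u$ and invoking the two remarks above,
\begin{equation*}
0\ge\limsup_j\langle xx_{n_j},xu\rangle=\frac{1}{2}d^2(x,u)+\frac{1}{2}\limsup_j\big(d^2(x,x_{n_j})-d^2(x_{n_j},u)\big)\ge\frac{1}{2}d^2(x,u)+\frac{1}{2}\big(r(x,\{x_{n_j}\})^2-r(u,\{x_{n_j}\})^2\big),
\end{equation*}
so $r(u,\{x_{n_j}\})^2\ge d^2(x,u)+r(x,\{x_{n_j}\})^2$. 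On the other hand $u$ is the asymptotic center, whence $r(u,\{x_{n_j}\})\le r(x,\{x_{n_j}\})$; the two inequalities together force $d(x,u)=0$. Since every subsequence of $\{x_n\}$ then has asymptotic center $x$, this is exactly $x_n\overset{\triangle}{\longrightarrow}x$.

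The geometric content is light — inequality \eqref{projection} together with Proposition \ref{weakcondef2} in one direction, and the defining minimality of the asymptotic center in the other — so the main obstacle is the careful handling of $\limsup$'s of differences of squared distances in the converse direction, together with the observation that the equivalence is only valid under the (here customary) assumption that $\{x_n\}$ is bounded.
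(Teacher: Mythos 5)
The paper does not prove this proposition at all: it is quoted from \cite[Theorem 2.6]{kakavandi2013} and used as a black box, so there is no in-house argument to compare yours against. Your reconstruction is correct and is essentially the standard proof: the forward implication via Proposition \ref{weakcondef2} and the projection inequality \eqref{projection} applied to the segment $I=[x,y]$ (the splitting $d^2(x,x_n)-d^2(x_n,P_Ix_n)=\big(d(x,x_n)-d(x_n,P_Ix_n)\big)\big(d(x,x_n)+d(x_n,P_Ix_n)\big)$ is handled correctly, using boundedness, which is automatic for a $\triangle$-convergent sequence); and the converse via the quasilinearization identity \eqref{e12}, stability of the hypothesis under passing to subsequences, and minimality of the unique asymptotic center $u$ of an arbitrary subsequence, which gives $r(u,\{x_{n_j}\})^2\geqslant d^2(x,u)+r(x,\{x_{n_j}\})^2\geqslant d^2(x,u)+r(u,\{x_{n_j}\})^2$ and hence $u=x$; the $\limsup$ inequalities you invoke are valid since all quantities are finite. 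Your remark that the converse needs $\{x_n\}$ bounded is a genuine and correct refinement of the statement as quoted: Kakavandi's original theorem is stated for bounded sequences (the hypothesis is dropped in the quotation above), and your $\mathbb R$-tree example works --- for $x_n$ at distance $n$ along pairwise distinct rays issuing from $x$ one gets $\langle xx_n,xy\rangle=-n\,d(x,y)$ for all but at most one index, so the inequality holds for every $y$ while the sequence, being unbounded, has no singleton asymptotic center and cannot $\triangle$-converge. Since the paper only applies the proposition to a $\triangle$-convergent (hence bounded) sequence in Theorem \ref{tautheokarcher4}, nothing downstream is affected. The only loose end is the parenthetical alternative for the forward direction (strong convexity along $[x,y]$ with $t\to0^+$), which is not worked out and would need extra care; your main argument stands without it.
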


\begin{definition}\label{Karcher mean}
Given a finite number of points $x_0,\ldots ,x_{n-1}$ in a Hadamard space, we define the functions
\begin{equation}\label{e22}
\mathcal{F}_n(x)=\frac{1}{n}\displaystyle\sum_{i=0}^{n-1} d^2(x_i,x),
\end{equation}
and
\begin{equation}\label{e23}
\mathcal{F}_n^k(x)=\frac{1}{n}\displaystyle\sum_{i=0}^{n-1} d^2(x_{k+i},x).
\end{equation}
From \cite[p.41 Proposition 2.2.17]{bacak2014convex} we know that these functions have unique minimizers. For $\mathcal{F}_n(x)$ (resp. $\mathcal{F}_n^k(x)$) the unique minimizer is denoted by $\sigma_n(x_0,\ldots ,x_{n-1})$ or shortly, $\sigma_n$,  (resp. $\sigma_n^k(x_k,\ldots ,x_{k+n-1})$ or shortly, $\sigma_n^k$) and it is called the mean of $x_0,\ldots ,x_{n-1}$ (resp. $x_k,\ldots ,x_{k+n-1}$). These mean is known as the Karcher mean of $x_0,\ldots ,x_{n-1}$ (resp. $x_k,\ldots ,x_{k+n-1}$) (see \cite{karcher1977}).
\end{definition}
\begin{remark} \label{Karcher mean Banach spaces}
The Karcher mean is also defined in each reflexive and strictly convex Banach spaces if we replace $d(\cdot,\cdot)$ by $\|\cdot\|$ in Definition \ref{Karcher mean}. Reflexivity ensures the existence of the minimizer and the strict convexity ensures uniqueness of the minimizer. In spite of Hilbert spaces which in the Karcher mean is the same arithmetic (linear) mean, in general reflexive and strictly convex Banach spaces they are different. Because matching of these means is equivalent to the parallelogram identity, which is a characterization of Hilbert spaces\cite{danamir2013}.
\end{remark}

\section{Tauberian Conditions for Metric Convergence}
A sequence $\{x_n\}$ in a Hadamard space $\mathscr H$ is called the Cesaro convergent or the mean convergent  (resp. almost convergent) to $x\in X$, if $\sigma_n$ (resp. $\sigma_n^k$) converges (resp. converges uniformly in $k$) to $x$. In this section, we present some Tauberian theorems for these means.
We need the next lemma to prove the Tauberian theorems for the Karcher mean.
\begin{lemma}\label{l15}
Let $\{x_n\}$ be a sequence in Hadamard space $\mathscr H$. Then for $\sigma_n^k$ defined as the above, for each $y\in\mathscr H$ and $k\geqslant 1$, we have:
\begin{enumerate}
\renewcommand{\theenumi}{\roman{enumi}}
\item\label{l15i1}
$\displaystyle d^2\big( \sigma_n^k ,y\big)\leqslant \frac{1}{n}\displaystyle\sum_{i=0}^{n-1} d^2(x_{k+i},y)-\frac{1}{n}\displaystyle\sum_{i=0}^{n-1} d^2(x_{k+i},\sigma_n^k).$
\item\label{l15i2} $\displaystyle d\big( \sigma_n^k ,y\big)\leqslant \frac{1}{n}\displaystyle\sum_{i=0}^{n-1} d(x_{k+i},y).$
\end{enumerate}
\end{lemma}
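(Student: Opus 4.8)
The plan is to prove part~(\ref{l15i1}) from the strong convexity of the averaged function $\mathcal{F}_n^k$ together with the minimality of $\sigma_n^k$, and then to deduce part~(\ref{l15i2}) from~(\ref{l15i1}) by rewriting it through the quasilinearization map and applying the Cauchy--Schwarz inequality.

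For~(\ref{l15i1}), fix $y\in\mathscr H$ and $k\geqslant 1$, write $\sigma:=\sigma_n^k$ for brevity, and move along the geodesic $m_t=(1-t)\sigma\oplus ty$, $t\in(0,1]$. Applying the strong convexity inequality for $d^2$ recalled in the preliminaries (with first slot $x_{k+i}$ and the geodesic from $\sigma$ to $y$) to each index and averaging over $i=0,\dots,n-1$ gives
\[
\mathcal{F}_n^k(m_t)\leqslant(1-t)\,\mathcal{F}_n^k(\sigma)+t\,\mathcal{F}_n^k(y)-t(1-t)\,d^2(\sigma,y).
\]
Since $\sigma$ is the (unique) minimizer of $\mathcal{F}_n^k$, the left-hand side is at least $\mathcal{F}_n^k(\sigma)$; inserting this, cancelling the common factor $t>0$, and letting $t\downarrow 0$ yields $\mathcal{F}_n^k(\sigma)+d^2(\sigma,y)\leqslant\mathcal{F}_n^k(y)$, which is exactly~(\ref{l15i1}) once $\mathcal{F}_n^k(\sigma)=\frac1n\sum_{i=0}^{n-1}d^2(x_{k+i},\sigma)$ is moved to the right.

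For~(\ref{l15i2}), I expand the quasilinearization via~\eqref{e12},
\[
\langle\overrightarrow{x_{k+i}y},\overrightarrow{\sigma y}\rangle=\tfrac12\big\{d^2(x_{k+i},y)+d^2(\sigma,y)-d^2(x_{k+i},\sigma)\big\},
\]
so that, averaging over $i$ and using~(\ref{l15i1}) in the form $\mathcal{F}_n^k(y)-\mathcal{F}_n^k(\sigma)\geqslant d^2(\sigma,y)$,
\[
\frac1n\sum_{i=0}^{n-1}\langle\overrightarrow{x_{k+i}y},\overrightarrow{\sigma y}\rangle=\tfrac12\big(\mathcal{F}_n^k(y)+d^2(\sigma,y)-\mathcal{F}_n^k(\sigma)\big)\geqslant d^2(\sigma,y).
\]
Now the Cauchy--Schwarz inequality $\langle\overrightarrow{x_{k+i}y},\overrightarrow{\sigma y}\rangle\leqslant d(x_{k+i},y)\,d(\sigma,y)$, applied termwise, turns this into $d^2(\sigma,y)\leqslant d(\sigma,y)\cdot\frac1n\sum_{i=0}^{n-1}d(x_{k+i},y)$; dividing by $d(\sigma,y)$, and treating the trivial case $d(\sigma,y)=0$ separately, gives~(\ref{l15i2}).

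The calculations are elementary; the two points requiring a little care are the passage $t\downarrow 0$ in~(\ref{l15i1}) --- which is justified because for every $t\in(0,1]$ one already has the valid bound $\mathcal{F}_n^k(\sigma)\leqslant\mathcal{F}_n^k(y)-(1-t)\,d^2(\sigma,y)$, so it suffices to take the supremum over $t$ --- and the recognition that~(\ref{l15i2}) should be routed through~(\ref{l15i1}) rather than attacked directly, since a bare triangle-inequality estimate only delivers $d(\sigma,y)\leqslant\frac1n\sum_{i}d(x_{k+i},\sigma)+\frac1n\sum_{i}d(x_{k+i},y)$, which is weaker than the claim. One should also note, as already recorded in Definition~\ref{Karcher mean}, that $\mathcal{F}_n^k$ has a unique minimizer, so $\sigma_n^k$ is well defined and the bound $\mathcal{F}_n^k(\sigma_n^k)\leqslant\mathcal{F}_n^k(m_t)$ is at our disposal.
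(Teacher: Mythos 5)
Your proof is correct and follows essentially the same route as the paper: part (i) is the identical strong-convexity-plus-minimality argument (your $t\downarrow 0$ is the paper's $\lambda\to 1$), and part (ii) is again deduced by combining (i) with a pointwise estimate and cancelling the averaged $d^2$ terms. The only cosmetic difference is that where you invoke the Berg--Nikolaev Cauchy--Schwarz inequality for $\langle\overrightarrow{x_{k+i}y},\overrightarrow{\sigma_n^k y}\rangle$, the paper uses the squared triangle inequality $\big(d(\sigma_n^k,y)-d(x_{k+i},y)\big)^2\leqslant d^2(\sigma_n^k,x_{k+i})$, which in this common-endpoint case is literally the same inequality, so no CAT(0)-specific input is actually needed at that step.
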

\begin{proof}
\eqref{l15i1}.
Since $\sigma_n^k$ is the unique minimizer of $\mathcal{F}_n^k(x)$ defined in \eqref{e23} and by the strong convexity of this function, for $0<\lambda <1$ we have:
\allowdisplaybreaks\begin{eqnarray}
\mathcal{F}_n^k(\sigma_n^k)&\leqslant & \mathcal{F}_n^k\big(\lambda\sigma_n^k\oplus (1-\lambda)y\big) \nonumber \\
&\leqslant &\lambda\mathcal{F}_n^k(\sigma_n^k)+ (1-\lambda)\mathcal{F}_n^k(y)- \lambda(1-\lambda)d^2\big( \sigma_n^k ,y\big). \nonumber
\end{eqnarray}
Therefor we obtain
\begin{equation*}
\lambda d^2\big( \sigma_n^k ,y\big)\leqslant \mathcal{F}_n^k(y)-\mathcal{F}_n^k(\sigma_n^k).
\end{equation*}
Letting $\lambda\rightarrow 1$ implies:
\begin{eqnarray}\label{ee1}
d^2\big( \sigma_n^k ,y\big)&\leqslant &\mathcal{F}_n^k(y)-\mathcal{F}_n^k(\sigma_n^k) \nonumber \\
&= &\frac{1}{n}\displaystyle\sum_{i=0}^{n-1} d^2(x_{k+i},y)-\frac{1}{n}\displaystyle\sum_{i=0}^{n-1} d^2(x_{k+i},\sigma_n^k),
\end{eqnarray}
which is the intended result. In particular, we have
\begin{equation*}
d^2\big( \sigma_n^k ,y\big)\leqslant \frac{1}{n}\displaystyle\sum_{i=0}^{n-1} d^2(x_{k+i},y).
\end{equation*}
\eqref{l15i2}. Triangle inequality yields:
\begin{equation*}
d^2\big( \sigma_n^k ,y\big)+d^2\big( y,x_{k+i}\big)-2d\big( \sigma_n^k ,y\big)d\big( y,x_{k+i}\big)\leqslant d^2\big( \sigma_n^k ,x_{k+i}\big),
\end{equation*}
hence
\begin{equation*}
d^2\big( y,x_{k+i}\big)\leqslant d^2\big( \sigma_n^k ,x_{k+i}\big)+2d\big( \sigma_n^k ,y\big)d\big( y,x_{k+i}\big)-d^2\big( \sigma_n^k ,y\big).
\end{equation*}
So summing up over $i$ from $0$ to $n-1$ and multiplying by $\frac{1}{n}$ imply:
\begin{equation}\label{convkarcher1}
\frac{1}{n}\displaystyle\sum_{i=0}^{n-1}d^2\big( y,x_{k+i}\big)\leqslant \frac{1}{n}\displaystyle\sum_{i=0}^{n-1}d^2\big( \sigma_n^k ,x_{k+i}\big)+2d\big( \sigma_n^k ,y\big)\frac{1}{n}\displaystyle\sum_{i=0}^{n-1}d\big( y,x_{k+i}\big)-d^2\big( \sigma_n^k ,y\big).
\end{equation}
On the other hand, by \eqref{ee1} we have:
\begin{equation}\label{convkarcher2}
\frac{1}{n}\displaystyle\sum_{i=0}^{n-1} d^2(x_{k+i},\sigma_n^k)\leqslant \frac{1}{n}\displaystyle\sum_{i=0}^{n-1} d^2(x_{k+i},y)-d^2(\sigma_n^k ,y).
\end{equation}
\eqref{convkarcher1} and \eqref{convkarcher2} show that
\begin{equation*}
d\big( \sigma_n^k ,y\big)\leqslant \frac{1}{n}\displaystyle\sum_{i=0}^{n-1} d(x_{k+i},y).
\end{equation*}
\end{proof}
In the next theorem, we consider the relation between convergence and the almost convergence.
\begin{theorem}\label{tautheokarcher1}
Let $\{x_n\}$ be a sequence in Hadamard space $\mathscr H$. Then $\{x_n\}$ converges to $y$ if and only if $\sigma_n^k$ defined as the unique minimizer of \eqref{e23} converges to $y$ uniformly in $k\geqslant 0$ $($or the sequence $\{x_n\}$ almost converges to $y)$ and $\{x_n\}$ is asymptotically regular $($i.e., $d(x_n,x_{n+1})\to0$ as $n\to\infty)$.
\end{theorem}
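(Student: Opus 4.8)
The plan is to prove the two implications of the equivalence separately, the first being routine and the second carrying the actual Tauberian content.

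\emph{Necessity.} Assuming $x_n\to y$, I would first observe that asymptotic regularity is immediate from the triangle inequality, $d(x_n,x_{n+1})\leqslant d(x_n,y)+d(y,x_{n+1})\to 0$. For the almost convergence of $\{x_n\}$ I would invoke part \eqref{l15i2} of Lemma \ref{l15}, which gives
$$ d\big(\sigma_n^k,y\big)\leqslant \frac1n\sum_{i=0}^{n-1} d(x_{k+i},y), $$
and then estimate the right-hand side uniformly in $k$. Since a convergent sequence is bounded, set $B:=\sup_n d(x_n,y)<\infty$; given $\varepsilon>0$ choose $M$ with $d(x_m,y)<\varepsilon/2$ for $m\geqslant M$, and split the averaging sum into the at most $M$ indices $i$ with $k+i<M$ (contributing at most $MB/n$) and the remaining ones (contributing at most $\varepsilon/2$). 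Taking $n\geqslant 2MB/\varepsilon$ makes $d(\sigma_n^k,y)<\varepsilon$ for \emph{every} $k\geqslant 0$, which is precisely uniform convergence of $\sigma_n^k$ to $y$.

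\emph{Sufficiency (the Tauberian part).} Assume $\sigma_n^k\to y$ uniformly in $k$ and $d(x_n,x_{n+1})\to 0$. Fix $\varepsilon>0$. By uniform convergence there is a \emph{fixed} $N$ with $d(\sigma_N^k,y)<\varepsilon$ for all $k\geqslant 0$. By asymptotic regularity there is $K$ with $d(x_m,x_{m+1})<\varepsilon/N$ for all $m\geqslant K$, whence for $k\geqslant K$ and $0\leqslant i\leqslant N-1$ the triangle inequality gives $d(x_{k+i},x_k)\leqslant\sum_{j=0}^{i-1} d(x_{k+j},x_{k+j+1})<i\varepsilon/N\leqslant\varepsilon$. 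Applying Lemma \ref{l15}\eqref{l15i1} with the test point $x_k$ in place of $y$ and with $n=N$,
$$ d^2\big(\sigma_N^k,x_k\big)\leqslant \frac1N\sum_{i=0}^{N-1} d^2(x_{k+i},x_k)-\frac1N\sum_{i=0}^{N-1} d^2(x_{k+i},\sigma_N^k)\leqslant \frac1N\sum_{i=0}^{N-1} d^2(x_{k+i},x_k)\leqslant\varepsilon^2, $$
so $d(x_k,\sigma_N^k)\leqslant\varepsilon$, and therefore $d(x_k,y)\leqslant d(x_k,\sigma_N^k)+d(\sigma_N^k,y)<2\varepsilon$ for all $k\geqslant K$. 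Since $\varepsilon$ is arbitrary, $x_n\to y$.

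The main obstacle is the order in which the constants must be chosen in the sufficiency argument: one cannot let $n\to\infty$ there, because asymptotic regularity only controls the spread $\max_{0\leqslant i\leqslant N-1} d(x_{k+i},x_k)$ of a window of \emph{fixed} length $N$ — the crude bound $d(x_{k+i},x_k)\leqslant (N-1)\sup_{m\geqslant k} d(x_m,x_{m+1})$ degrades as the window grows — so $N$ has to be frozen first, large enough for the uniform $\varepsilon$-closeness of $\sigma_N^k$ to $y$, and only afterwards is $K$ chosen so that the consecutive increments past $K$ fall below $\varepsilon/N$. A secondary point is the choice of test point: applying Lemma \ref{l15}\eqref{l15i1} with $x_k$ rather than $y$ is what converts ``all points of the window are close to $x_k$'' into ``$\sigma_N^k$ is close to $x_k$'', the bridge needed to compare $x_k$ with $y$. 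Everything else reduces to the triangle inequality and the boundedness of a convergent sequence.
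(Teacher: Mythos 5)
Your proposal is correct and follows essentially the same route as the paper: in both directions the key step is Lemma \ref{l15} applied with the window point $x_k$ as the test point, combined with asymptotic regularity to control $d(\sigma_n^k,x_k)$, and then the closeness of $\sigma_n^k$ to $y$. The differences are only cosmetic --- you combine $d(x_k,\sigma_N^k)$ and $d(\sigma_N^k,y)$ by the plain triangle inequality where the paper invokes the CN-inequality, you organize the sufficiency by fixing $\varepsilon$, then $N$, then $K$ instead of the paper's $\limsup_{k\to\infty}$ followed by $n\to\infty$, and your necessity part writes out the uniform-in-$k$ splitting argument that the paper states tersely.
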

\begin{proof}
\textbf{Necessity.} By Part \eqref{l15i1} of Lemma \ref{l15}, we have:
\begin{equation}\label{e25}
d^2\big( \sigma_n^k ,y\big)\leqslant \frac{1}{n}\displaystyle\sum_{i=0}^{n-1} d^2(x_{k+i},y).
\end{equation}
Since the sequence $\{x_n\}$ converges strongly to $y$, $d^2(x_n,y)\longrightarrow 0$ and hence the right side of \eqref{e25} converges to zero uniformly in $k$, consequently $\sigma_n^k$ converges to $y$ uniformly in $k$.  Also it is clear that the sequence $\{x_n\}$ is asymptotically regular.\\
\textbf{Sufficiency.} Let $\sigma_n^k$ converge to $y$ uniformly in $k\geqslant 0$ and $\{x_n\}$ is asymptotically regular. By $CN-$inequality, we have:
\allowdisplaybreaks\begin{eqnarray}
0&\leqslant &d^2\big( \sigma_n^k,\frac{1}{2}x_k\oplus\frac{1}{2}y\big)\nonumber \\
&\leqslant &\frac{1}{2}d^2(\sigma_n^k,x_k)+\frac{1}{2}d^2(\sigma_n^k,y)-\frac{1}{4}d^2(x_k,y).\nonumber
\end{eqnarray}
Therefore by Part \eqref{l15i1} of Lemma \ref{l15}, we obtain:
\allowdisplaybreaks\begin{eqnarray}
d^2(x_k,y)&\leqslant & 2d^2(\sigma_n^k,x_k)+2d^2(\sigma_n^k,y) \nonumber \\
&\leqslant &\frac{2}{n}\displaystyle\sum_{i=0}^{n-1}d^2(x_{k+i},x_k)+2d^2(\sigma_n^k,y)\nonumber \\
&= &\frac{2}{n}\bigg\{d^2(x_k,x_{k+1})+d^2(x_k,x_{k+2})+\cdots +d^2(x_k,x_{k+n-1}) \bigg\}+2d^2(\sigma_n^k,y)\nonumber \\
&\leqslant &\frac{2}{n}\bigg\{d^2(x_k,x_{k+1})+\bigg(\displaystyle\sum_{i=k}^{k+1}d(x_i,x_{i+1})\bigg)^2+\cdots +\bigg(\displaystyle\sum_{i=k}^{k+n-2}d(x_i,x_{i+1})\bigg)^2 \bigg\} \nonumber\\
& & + 2d^2(\sigma_n^k,y)\nonumber\\
&\leqslant &\frac{2}{n}\bigg(\displaystyle\sup_{i\geqslant k} d(x_i,x_{i+1})\bigg)^2\big( 1+2^2+\cdots (n-1)^2\big)+2d^2(\sigma_n^k,y) \nonumber\\
&= &\frac{2}{n}\frac{(n-1)n(2n-1)}{2}\bigg(\displaystyle\sup_{i\geqslant k} d(x_i,x_{i+1})\bigg)^2+2d^2(\sigma_n^k,y)  \nonumber\\
&= &(n-1)(2n-1)\bigg(\displaystyle\sup_{i\geqslant k} d(x_i,x_{i+1})\bigg)^2+2d^2(\sigma_n^k,y).\nonumber
\end{eqnarray}
From asymptotic regularity of $\{x_n\}$, $d(x_{n+1},x_n)\longrightarrow 0$. Taking $\limsup$ when $k\to\infty$, we get:
\begin{equation*}
\displaystyle\limsup_{k\rightarrow\infty} d^2(x_k,y)\leqslant 2\displaystyle\limsup_{k\rightarrow\infty} d^2(\sigma_n^k,y)\leqslant 2\displaystyle\sup_{k\geqslant 1} d^2(\sigma_n^k,y).
\end{equation*}
Since $\sigma_n^k$ is uniformly convergent to $y$, letting $n\longrightarrow \infty$ completes the proof.
\end{proof}
For the relation between the mean convergence and the almost convergence defined above, we present the following Tauberian condition:
\begin{equation}\label{taucondkarchermean2}
\displaystyle\lim_{n\rightarrow\infty}\displaystyle\sup_{k\geqslant 0}\bigg(\frac{1}{n+k}\displaystyle\sum_{i=0}^{k-1} \Big(d^2(x_i,\sigma_n^k)-d^2(x_i,\sigma_k)\Big)\bigg)=0.
\end{equation}
\begin{theorem}\label{tautheokarcher2}
For the sequence $\{x_n\}$ in Hadamard space $\mathscr H$, $\sigma_n^k$ defined as the unique minimizer of \eqref{e23} converges to $y$ uniformly in $k\geqslant 0($or the sequence $\{x_n\}$ is almost convergent to $y)$ if and only if $\sigma_n$ defined as the unique minimizer of \eqref{e22} converges to $y$ $($or the sequence $\{x_n\}$ is mean convergent to $y)$ and \eqref{taucondkarchermean2} is satisfied.
\end{theorem}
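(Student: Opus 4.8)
The plan is to transpose the classical scalar Tauberian proof for almost convergence to the Karcher mean, the substitute for the linear identity $(n+k)\sigma_{n+k}=k\sigma_k+n\sigma_n^k$ being the elementary splitting of the defining sums in \eqref{e22}--\eqref{e23},
\[
\mathcal{F}_{n+k}(x)=\frac{k}{n+k}\,\mathcal{F}_k(x)+\frac{n}{n+k}\,\mathcal{F}_n^k(x)\qquad(x\in\mathscr{H}),
\]
which is legitimate since $\mathcal{F}_n=\mathcal{F}_n^{0}$ (so also $\sigma_n=\sigma_n^{0}$) and $\sigma_k$ is the minimiser of $\mathcal{F}_k$. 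Throughout write $T_n^k$ for the quantity inside the limit in \eqref{taucondkarchermean2}, so that
\[
T_n^k=\frac{1}{n+k}\sum_{i=0}^{k-1}\bigl(d^2(x_i,\sigma_n^k)-d^2(x_i,\sigma_k)\bigr)=\frac{k}{n+k}\bigl(\mathcal{F}_k(\sigma_n^k)-\mathcal{F}_k(\sigma_k)\bigr)\geq0,
\]
the inequality because $\sigma_k$ minimises $\mathcal{F}_k$.

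\textbf{Sufficiency} ($\sigma_n\to y$ and \eqref{taucondkarchermean2} $\Rightarrow$ almost convergence to $y$). I would apply Part \eqref{l15i1} of Lemma \ref{l15} to the minimiser $\sigma_{n+k}$ of $\mathcal{F}_{n+k}$, evaluated at the point $\sigma_n^k$, to get $d^2(\sigma_{n+k},\sigma_n^k)\leq\mathcal{F}_{n+k}(\sigma_n^k)-\mathcal{F}_{n+k}(\sigma_{n+k})$, then insert the splitting above on the right and use $\mathcal{F}_k(\sigma_{n+k})\geq\mathcal{F}_k(\sigma_k)$ (minimality) together with Part \eqref{l15i1} of Lemma \ref{l15} for $\sigma_n^k$ at the point $\sigma_{n+k}$, i.e.\ $\mathcal{F}_n^k(\sigma_n^k)-\mathcal{F}_n^k(\sigma_{n+k})\leq-d^2(\sigma_n^k,\sigma_{n+k})$. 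Collecting terms yields
\[
\Bigl(1+\tfrac{n}{n+k}\Bigr)\,d^2(\sigma_{n+k},\sigma_n^k)\leq T_n^k,\qquad\text{hence}\qquad d^2(\sigma_{n+k},\sigma_n^k)\leq T_n^k,
\]
the Hadamard replacement for the scalar relation $\sigma_n^k-\sigma_{n+k}=\tfrac{k}{n+k}(\sigma_n^k-\sigma_k)$. Then $d^2(\sigma_n^k,y)\leq2d^2(\sigma_n^k,\sigma_{n+k})+2d^2(\sigma_{n+k},y)\leq2T_n^k+2d^2(\sigma_{n+k},y)$, so that $\sup_{k\geq0}d^2(\sigma_n^k,y)\leq2\sup_{k\geq0}T_n^k+2\sup_{m\geq n}d^2(\sigma_m,y)$, and both terms on the right vanish as $n\to\infty$ by \eqref{taucondkarchermean2} and by $\sigma_m\to y$; hence $\sigma_n^k\to y$ uniformly in $k$.

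\textbf{Necessity} (almost convergence to $y$ $\Rightarrow$ $\sigma_n\to y$ and \eqref{taucondkarchermean2}). Since $\sigma_n=\sigma_n^{0}$, uniform convergence of $\sigma_n^k$ immediately gives $\sigma_n\to y$. For \eqref{taucondkarchermean2} I would estimate $T_n^k$ directly: from $d(x_i,\sigma_n^k)\leq d(x_i,\sigma_k)+d(\sigma_k,\sigma_n^k)$ one gets $d^2(x_i,\sigma_n^k)-d^2(x_i,\sigma_k)\leq d(\sigma_k,\sigma_n^k)\bigl(2d(x_i,\sigma_k)+d(\sigma_k,\sigma_n^k)\bigr)$, and after summing over $i$, applying Cauchy--Schwarz to $\sum_{i=0}^{k-1}d(x_i,\sigma_k)$ and using $\tfrac{k}{n+k}\leq1$,
\[
T_n^k\leq d(\sigma_k,\sigma_n^k)\Bigl(2\sqrt{\mathcal{F}_k(\sigma_k)}+d(\sigma_k,\sigma_n^k)\Bigr),\qquad\mathcal{F}_k(\sigma_k)\leq\mathcal{F}_k(y).
\]
Writing $\eta_n:=\sup_{k\geq0}d(\sigma_n^k,y)\to0$, using $d(\sigma_k,\sigma_n^k)\leq\eta_n+d(\sigma_k,y)$ and $\sigma_k\to y$, I would split $\sup_k$ into the finitely many indices $k<K$ (where $\tfrac{k}{n+k}\leq\tfrac{K}{n}\to0$) and the indices $k\geq K$ (where $d(\sigma_k,y)$ is as small as desired) to conclude $\lim_n\sup_kT_n^k=0$.

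\textbf{The main obstacle.} The sufficiency half is self-contained and, to my mind, the clean heart of the proof. The delicate point is the last display of the necessity half: it is uniform in $k$ only when $\sup_k\mathcal{F}_k(y)=\sup_k\tfrac1k\sum_{i=0}^{k-1}d^2(x_i,y)<\infty$, that is, when $\{x_n\}$ is bounded --- the standing hypothesis of Lorentz's scalar theory. In a Hadamard space almost convergence does \emph{not} by itself force boundedness (for instance, in a metric tree, $x_n$ at distance $n$ along the $n$-th of a family of geodesic rays issuing from a vertex $o$ has every $\sigma_n^k$ with $n\geq5$ equal to $o$, so it is almost convergent to $o$ yet unbounded); hence the argument as written needs $\{x_n\}$ bounded, or else a sharper estimate exploiting that the regime in which $\mathcal{F}_k(\sigma_k)$ is large coincides with the regime in which $d(\sigma_k,\sigma_n^k)$ is small. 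I expect disentangling this interplay to be the only genuine difficulty.
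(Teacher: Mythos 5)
Your sufficiency argument is, in substance, the paper's own proof: the paper bounds $d^2(\sigma_n^k,y)\leqslant 2d^2(\sigma_{n+k},\sigma_n^k)+2d^2(\sigma_{n+k},y)$ via the CN-inequality, applies Part \eqref{l15i1} of Lemma \ref{l15} to $\sigma_{n+k}$ evaluated at $\sigma_n^k$, splits the resulting sum at $i=k$, and uses minimality of $\sigma_n^k$ (step $\star$) and of $\sigma_k$ (step $\star\star$) to arrive at exactly your estimate $\sup_{k\geqslant 0}d^2(\sigma_n^k,y)\leqslant 2\sup_{k\geqslant 0}T_n^k+2\sup_{m\geqslant n}d^2(\sigma_m,y)$; your functional splitting of $\mathcal{F}_{n+k}$ is the same computation written intrinsically, and your extra use of Lemma \ref{l15}\eqref{l15i1} at the second minimizer only sharpens a constant that is never needed. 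So the half you call the ``clean heart'' is correct and coincides with the paper.

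The difference is in the necessity half, and there the gap you flag is real but it is a gap in the paper, not a divergence from it: the paper's entire necessity proof is ``With getting $k=0$, it is obvious,'' which establishes only that almost convergence gives $\sigma_n\to y$ and never verifies that \eqref{taucondkarchermean2} holds. Your bounded-case argument (triangle inequality, Cauchy--Schwarz on $\frac1k\sum_{i<k}d(x_i,\sigma_k)$, splitting $k<K$ from $k\geqslant K$) therefore already proves strictly more than the paper does, and your observation that almost convergence does not force boundedness in a Hadamard space (the $\mathbb{R}$-tree example with $x_n$ at distance $n$ on the $n$-th ray, all window means eventually at the vertex) correctly shows that the unbounded case is genuinely open after reading the paper as well --- note, though, that your example is not a counterexample to \eqref{taucondkarchermean2} itself, since there $\sigma_n^k=\sigma_k$ eventually and $T_n^k\to 0$. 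Two further points support your diagnosis: in the classical settings of Lorentz and Kuo almost convergence is only defined for bounded sequences, so boundedness is the tacit standing hypothesis being transplanted; and in Hilbert space one has the identity $T_n^k=\frac{k}{n+k}d^2(\sigma_k,\sigma_n^k)$, which makes necessity of \eqref{taucondkarchermean2} automatic, whereas in a general Hadamard space quasilinearization together with the variational inequality $\frac1k\sum_{i<k}\langle\overrightarrow{\sigma_k\sigma_n^k},\overrightarrow{\sigma_k x_i}\rangle\leqslant 0$ only yields the lower bound $T_n^k\geqslant\frac{k}{n+k}d^2(\sigma_k,\sigma_n^k)$, so no soft argument replaces the scalar identity. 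The honest fix is to add boundedness of $\{x_n\}$ (or restate necessity under it), exactly as you propose; as it stands, neither your proposal nor the paper proves the necessity of \eqref{taucondkarchermean2} for unbounded sequences.
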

\begin{proof}
\textbf{Necessity.} With getting $k=0$, it is obvious.\\
\textbf{Sufficiency.} Let $\sigma_n$ converge to $y$ and \eqref{taucondkarchermean2} is satisfied. First by $CN-$inequality we have:
\allowdisplaybreaks\begin{eqnarray}
0&\leqslant &d^2\big(\sigma_{n+k},\frac{1}{2}\sigma_n^k\oplus\frac{1}{2}y \big)\nonumber \\
&\leqslant &\frac{1}{2}d^2(\sigma_{n+k},\sigma_n^k)+\frac{1}{2}d^2(\sigma_{n+k},y)-\frac{1}{4}d^2(\sigma_n^k,y).\nonumber
\end{eqnarray}
Therefore by Part \eqref{l15i1} of Lemma \ref{l15}, and definition of $\sigma_n^k$ in inequality $\star$ and also definition of $\sigma_k$ in inequality $\star\star$, we obtain:
\allowdisplaybreaks\begin{eqnarray}
d^2(\sigma_n^k,y)&\leqslant & 2d^2(\sigma_{n+k},\sigma_n^k)+2d^2(\sigma_{n+k},y) \nonumber \\
&\leqslant &2\bigg(\frac{1}{n+k}\displaystyle\sum_{i=0}^{n+k-1}d^2(x_i,\sigma_n^k)- \frac{1}{n+k}\displaystyle\sum_{i=0}^{n+k-1}d^2(x_i,\sigma_{n+k})\bigg)+2d^2(\sigma_{n+k},y) \nonumber \\
&\leqslant &2\bigg(\frac{1}{n+k}\displaystyle\sum_{i=0}^{k-1}d^2(x_i,\sigma_n^k)+ \frac{1}{n+k}\displaystyle\sum_{i=k}^{n+k-1}d^2(x_i,\sigma_n^k)    \nonumber \\
& &- \frac{1}{n+k}\displaystyle\sum_{i=0}^{n+k-1}d^2(x_i,\sigma_{n+k})\bigg)+2d^2(\sigma_{n+k},y) \nonumber \\
&\overset{\star}{\leqslant} &2\bigg(\frac{1}{n+k}\displaystyle\sum_{i=0}^{k-1}d^2(x_i,\sigma_n^k)+ \frac{1}{n+k}\displaystyle\sum_{i=k}^{n+k-1}d^2(x_i,\sigma_{n+k})    \nonumber \\
& &- \frac{1}{n+k}\displaystyle\sum_{i=0}^{n+k-1}d^2(x_i,\sigma_{n+k})\bigg)+2d^2(\sigma_{n+k},y) \nonumber \\
&\overset{\star\star}{\leqslant} &2\bigg(\frac{1}{n+k}\displaystyle\sum_{i=0}^{k-1}\Big(d^2(x_i,\sigma_n^k)-d^2(x_i,\sigma_k) \Big)\bigg)+2d^2(\sigma_{n+k},y). \nonumber
\end{eqnarray}
Thus we get:
\begin{eqnarray}
\displaystyle\sup_{k\geqslant 0}d^2(\sigma_n^k,y)&\leqslant &2\displaystyle\sup_{k\geqslant 0}\bigg(\frac{1}{n+k}\displaystyle\sum_{i=0}^{k-1}\Big(d^2(x_i,\sigma_n^k)-d^2(x_i,\sigma_k) \Big)\bigg)+2\displaystyle\sup_{k\geqslant n}d^2(\sigma_{k},y). \nonumber
\end{eqnarray}
Letting $n\to\infty$, the proof is now complete by the assumptions.
\end{proof}
In the next theorem, we show another Tauberian condition for the relation between the mean convergence and convergence of the sequence in Hadamard spaces.
We first state an elementary lemma without proof.
\begin{lemma}\label{l7}
For a real sequence $\{a_n\}$, we have:
\begin{equation*}
\frac{1}{n+1}\sum_{k=0}^{n} a_k=a_n-\frac{1}{n+1}\sum_{k=1}^{n}k(a_k-a_{k-1}).
\end{equation*}
\end{lemma}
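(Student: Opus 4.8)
The plan is to prove this identity by purely elementary means, and the cleanest route is summation by parts (Abel's identity); I will sketch that, since it makes transparent where the weight $k$ in the correction term comes from. An induction on $n$ works equally well and I will indicate it as an alternative.

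First I would clear the denominator and rewrite the claimed identity in the equivalent form
\begin{equation*}
\sum_{k=0}^{n} a_k = (n+1)a_n - \sum_{k=1}^{n} k\,(a_k - a_{k-1}),
\end{equation*}
so that the statement reduces to verifying a polynomial identity in the finitely many scalars $a_0,\dots,a_n$. Next I would expand the correction sum as $\sum_{k=1}^{n} k a_k - \sum_{k=1}^{n} k a_{k-1}$ and perform the index shift $k\mapsto j+1$ in the second piece, obtaining $\sum_{k=1}^{n} k a_{k-1} = \sum_{j=0}^{n-1}(j+1)a_j = \sum_{j=0}^{n-1} j a_j + \sum_{j=0}^{n-1} a_j$. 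Collecting the coefficient of each $a_j$ then shows $\sum_{k=1}^{n} k a_k - \sum_{j=0}^{n-1} j a_j = n a_n$, so the right-hand side becomes $(n+1)a_n - n a_n + \sum_{j=0}^{n-1} a_j = a_n + \sum_{j=0}^{n-1} a_j = \sum_{k=0}^{n} a_k$, which is exactly the left-hand side.

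Alternatively, induction on $n$ is immediate: the base case $n=0$ reads $a_0 = a_0$, and for the step one subtracts the identity at level $n$ from the identity at level $n+1$, so that after the induction hypothesis the only thing to check is that the increment on each side equals $a_{n+1}$ once the contribution of the new term $(n+1)(a_{n+1}-a_n)$ is telescoped against the change in $(n+1)a_n$ versus $(n+2)a_{n+1}$.

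The only point that requires any care is the bookkeeping of the index shift in the double sum — making sure the endpoint terms $j=0$ and $j=n$ are tracked so that no stray $a_0$ or $a_n$ is dropped or double-counted; beyond that the argument is routine arithmetic, which is presumably why the authors state the lemma without proof.
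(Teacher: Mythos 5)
Your argument is correct: after clearing the denominator, the index shift $k\mapsto j+1$ gives $\sum_{k=1}^{n}k(a_k-a_{k-1})=na_n-\sum_{j=0}^{n-1}a_j$, and substituting this into $(n+1)a_n-\sum_{k=1}^{n}k(a_k-a_{k-1})$ yields $\sum_{k=0}^{n}a_k$ exactly as you say, with the endpoint terms handled properly. The paper deliberately states this lemma without proof, so there is nothing to compare against; your summation-by-parts verification (and the induction alternative) is the standard elementary argument and correctly supplies the omitted details.
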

By Theorems \ref{tautheokarcher1} and \ref{tautheokarcher2}, we know that convergence of the sequence $\{x_n\}$ implies its mean convergence, for the reverse direction we have the following theorem.
\begin{theorem}\label{tautheokarcher3}
Let $\{x_n\}$ be a sequence in Hadamard space $\mathscr H$, and $\sigma_n$ be the mean sequence defined as the unique minimizer of \eqref{e22}. If $\sigma_n$ converges to $y$ and $nd(x_n,x_{n-1})\longrightarrow 0$ as $n\to\infty$, then $x_n$ converges to $y$.
\end{theorem}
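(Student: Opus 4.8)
The plan is to reduce everything to the single estimate $d(\sigma_n,x_n)\to 0$: granting this, the triangle inequality $d(x_n,y)\le d(x_n,\sigma_n)+d(\sigma_n,y)$ together with the hypothesis $\sigma_n\to y$ gives $x_n\to y$ at once. To bound $d(\sigma_n,x_n)$ I would invoke part~\eqref{l15i2} of Lemma~\ref{l15} with $k=0$ (so that $\sigma_n^0=\sigma_n$ and $x_{0+i}=x_i$; the proof of that lemma applies verbatim to $\sigma_n$), taking the free point there to be $x_n$; this yields
\[
d(\sigma_n,x_n)\le\frac1n\sum_{i=0}^{n-1}d(x_i,x_n).
\]
It therefore remains only to show that the average on the right tends to $0$.

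For this I would telescope each displacement: by the triangle inequality $d(x_i,x_n)\le\sum_{j=i}^{n-1}d(x_j,x_{j+1})$ for $0\le i\le n-1$. Summing over $i$ and interchanging the two finite sums — the step playing, in this metric setting, the role of the scalar identity in Lemma~\ref{l7} — we obtain
\[
\sum_{i=0}^{n-1}d(x_i,x_n)\le\sum_{i=0}^{n-1}\sum_{j=i}^{n-1}d(x_j,x_{j+1})=\sum_{j=0}^{n-1}(j+1)\,d(x_j,x_{j+1})=\sum_{m=1}^{n}m\,d(x_{m-1},x_m).
\]
Setting $\varepsilon_m:=m\,d(x_m,x_{m-1})$, the hypothesis $n\,d(x_n,x_{n-1})\to 0$ says exactly that $\{\varepsilon_m\}$ is a null real sequence, and hence its Cesaro means vanish: $\frac1n\sum_{m=1}^{n}\varepsilon_m\to 0$. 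Chaining the three estimates gives $d(\sigma_n,x_n)\le\frac1n\sum_{m=1}^{n}\varepsilon_m\to 0$, as required.

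The argument is short and elementary. The one step that needs care is the interchange of the two sums followed by the reindexing $m=j+1$; this is where the additive identity of Lemma~\ref{l7} is traded for the metric triangle inequality, and it is also the reason the Tauberian weight must be $n\,d(x_n,x_{n-1})$ and not merely $d(x_n,x_{n-1})$. I do not expect to need the $CN$-inequality, quasilinearization, or $\triangle$-convergence here: only part~\eqref{l15i2} of Lemma~\ref{l15}, the triangle inequality, and the elementary fact that the Cesaro means of a null real sequence tend to $0$.
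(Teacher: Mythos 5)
Your proof is correct and follows essentially the same route as the paper's: both reduce matters, via part~\eqref{l15i2} of Lemma~\ref{l15} applied at $k=0$, to bounding the distance from $x_n$ to a Karcher mean by the averaged distances and then by $\frac{1}{n}\sum_{m=1}^{n} m\,d(x_{m-1},x_m)$, which tends to zero as the Ces\`aro mean of the null sequence $m\,d(x_m,x_{m-1})$. The only cosmetic differences are that you compare $x_n$ with $\sigma_n$ rather than $\sigma_{n+1}$ and obtain the weighted bound by telescoping and interchanging a double sum instead of invoking the summation-by-parts identity of Lemma~\ref{l7} together with the termwise triangle inequality; both manipulations produce the same estimate.
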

\begin{proof}
For a fixed integer $n>0$, let $a_i=d(x_i,x_n)$ for $i=1,2,\cdots, n$, then by Part \eqref{l15i2} of Lemma \ref{l15} and Lemma \ref{l7}, we have:
\allowdisplaybreaks\begin{eqnarray}
d(x_n,y)&\leqslant &d(x_n,\sigma_{n+1})+d(\sigma_{n+1},y)\nonumber \\
&\leqslant &\frac{1}{n+1}\sum_{i=0}^{n}d(x_n,x_i)+d(\sigma_{n+1},y) \nonumber \\
&\leqslant &\frac{1}{n+1}\sum_{i=1}^{n}i\big(d(x_{i-1},x_n)-d(x_i,x_n)\big)+d(\sigma_{n+1},y) \nonumber\\
&\leqslant &\frac{1}{n+1}\sum_{i=1}^{n}id(x_{i-1},x_i)+d(\sigma_{n+1},y)\to 0,\ \ \text{as}\ n\to\infty \nonumber
\end{eqnarray}
which concludes the result.
\end{proof}
Summary of the results of Tauberian theorems for the Karcher mean is showed in the below figure:
\begin{figure}[!h]
\centerline{\includegraphics[height=5cm]{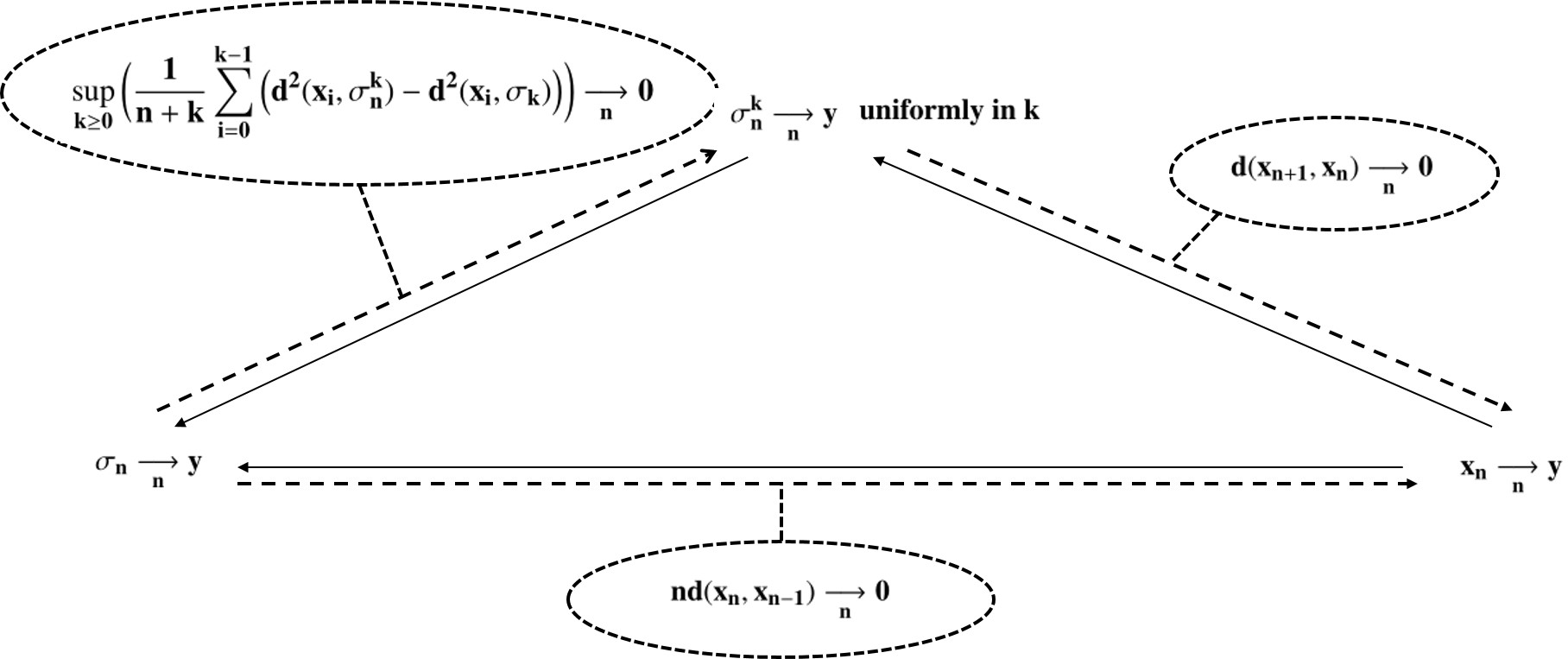}}
\end{figure}

The results of this section and Remark \ref{Karcher mean Banach spaces} propose this question: Does the Tauberian conditions hold for the Karcher mean in more general geodesic spaces for example uniformly convex geodesic metric spaces or in uniformly convex Banach spaces?
\begin{remark}
Let $(\mathscr H,d)$ be a Hadamard space. For a curve $c:[0,\infty)\longrightarrow\mathscr H$, $\sigma_T$ and $\sigma_T^s$(the Cesaro mean and the Vallee-Poussin means) are defined respectively as the unique minimizers of the functions 
\begin{equation*}
\mathcal{G}_T(y)=\frac{1}{T}\displaystyle\int_{0}^{T} d^2(c(t),y)dt,
\end{equation*}
and
\begin{equation*}
\mathcal{G}_T^s(y)=\frac{1}{T}\displaystyle\int_{0}^{T} d^2(c(t+s),y)dt.
\end{equation*}
A curve $c:[0,\infty)\longrightarrow\mathscr H$ is called the Cesaro convergent or the mean convergent  (resp. almost convergent) to $x\in X$, if $\sigma_T$ (resp. $\sigma_T^s$) converges (resp. converges uniformly in $s$) to $x$ as $T\longrightarrow\infty$. It is easy to check that the results of Theorems \ref{tautheokarcher1} and \ref{tautheokarcher2} remain true for curves with similar proofs, but Theorem \ref{tautheokarcher3} for curves remains an open question.
\end{remark}
\section{Tauberian Conditions for $\triangle-$Convergence}
In this section, we are going to show that the obtained results in the previous section hold for $\triangle-$convergence. Proofs of Corollaries 4.1, 4.2 and 4.3 is directly concluded from the proofs of Theorems 3.2, 3.3 and 3.5. In the next corollary, we see that the "if" part of Theorem \ref{tautheokarcher1} holds for $\triangle-$convergence. But this fact that $\triangle-$convergence of $\{x_n\}$ implies $\triangle-$almost convergence of $\{x_n\}$ in general Hadamard spaces is an open question for us.
\begin{corollary}\label{tautheokarcher1weak}
Let $\{x_n\}$ be a sequence in Hadamard space $\mathscr H$. If $\sigma_n^k$ defined as unique minimizer of \eqref{e23} $\triangle-$converges to $y$ uniformly in $k\geqslant 0$ $($or the sequence $\{x_n\}$\linebreak $\triangle-$almost converges to $y)$ and $\{x_n\}$ is asymptotically regular $($i.e., $d(x_n,x_{n+1})\to0$ as $n\to\infty)$, then $\{x_n\}$ $\triangle-$converges to $y$.
\end{corollary}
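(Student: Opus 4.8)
The plan is to follow the sufficiency part of Theorem~\ref{tautheokarcher1}, but to avoid its final step (which needs $d(\sigma_n^k,y)\to 0$) by passing to geodesic projections, so that only $\triangle$-convergence of the means is used. By Proposition~\ref{weakcondef2} it suffices to prove that, for every geodesic $I$ issuing from $y$ with metric projection $P_I:\mathscr H\to I$, one has $d(y,P_Ix_k)\to 0$ as $k\to\infty$. (As always when speaking of $\triangle$-limits we take $\{x_n\}$, and hence $\{\sigma_n^k\}$, to be bounded; under the present hypotheses this is what makes the notion of $\triangle$-almost convergence of $\{x_n\}$ meaningful.)

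There are two ingredients. First, applying Part~\eqref{l15i2} of Lemma~\ref{l15} with $y$ replaced by $x_k$, and using $d(x_{k+i},x_k)\le\sum_{j=k}^{k+i-1}d(x_j,x_{j+1})\le i\,\varepsilon_k$ with $\varepsilon_k:=\sup_{j\ge k}d(x_j,x_{j+1})$, we obtain
\[
d(\sigma_n^k,x_k)\le\frac1n\sum_{i=0}^{n-1}d(x_{k+i},x_k)\le\frac1n\sum_{i=0}^{n-1}i\,\varepsilon_k=\frac{n-1}{2}\,\varepsilon_k ,
\]
and $\varepsilon_k\to 0$ by asymptotic regularity. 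Second, the projection $P_I$ onto the closed convex set $I$ is nonexpansive (a standard property of metric projections in Hadamard spaces, cf.\ \eqref{projection}), whence for all $n\ge 1$ and $k\ge 0$
\[
d(y,P_Ix_k)\le d(y,P_I\sigma_n^k)+d(P_I\sigma_n^k,P_Ix_k)\le d(y,P_I\sigma_n^k)+\frac{n-1}{2}\,\varepsilon_k\le\sup_{k'\ge 0}d(y,P_I\sigma_n^{k'})+\frac{n-1}{2}\,\varepsilon_k .
\]

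Now fix $n$ and let $k\to\infty$: since $\varepsilon_k\to 0$, this gives $\limsup_{k\to\infty}d(y,P_Ix_k)\le\sup_{k'\ge 0}d(y,P_I\sigma_n^{k'})$. The left-hand side does not depend on $n$, so taking the infimum over $n\ge 1$ and using that $\sigma_n^k$ $\triangle$-converges to $y$ uniformly in $k$ — which, via Proposition~\ref{weakcondef2}, means $\sup_{k\ge 0}d(y,P_I\sigma_n^k)\to 0$ as $n\to\infty$ — we conclude $\limsup_k d(y,P_Ix_k)=0$. Since $I$ was an arbitrary geodesic issuing from $y$, Proposition~\ref{weakcondef2} gives $x_n\overset{\triangle}{\longrightarrow}y$. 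If instead the uniform $\triangle$-convergence of $\{\sigma_n^k\}$ is to be used in the quasilinearization form of Proposition~\ref{weakcondef3}, the same scheme works verbatim after replacing the projection estimate by the identity $\langle\overrightarrow{yx_k},\overrightarrow{yz}\rangle=\langle\overrightarrow{y\sigma_n^k},\overrightarrow{yz}\rangle+\langle\overrightarrow{\sigma_n^kx_k},\overrightarrow{yz}\rangle$ together with the Cauchy--Schwarz inequality $\langle\overrightarrow{\sigma_n^kx_k},\overrightarrow{yz}\rangle\le d(\sigma_n^k,x_k)\,d(y,z)$.

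I expect the only genuine obstacle to be organizational rather than mathematical: one must make sure that the precise meaning of ``$\triangle$-converges to $y$ uniformly in $k$'' (i.e.\ of $\triangle$-almost convergence) adopted in the paper is the one that delivers $\sup_k d(y,P_I\sigma_n^k)\to 0$ (equivalently $\sup_k\langle\overrightarrow{y\sigma_n^k},\overrightarrow{yz}\rangle\to 0$ for each $z$), and one should record the boundedness of $\{x_n\}$, which is needed only for the $\triangle$-limit notions to apply and follows from the hypotheses together with Lemma~\ref{l15}. The quantitative core — the estimate $d(\sigma_n^k,x_k)\le\frac{n-1}{2}\varepsilon_k$ and the interchange ``fix $n$, let $k\to\infty$, then let $n\to\infty$'' — is exactly the bookkeeping already present in the proof of Theorem~\ref{tautheokarcher1}.
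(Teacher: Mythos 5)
Your proposal is correct and follows essentially the same route as the paper: the paper also reduces the claim to Proposition \ref{weakcondef2} via nonexpansiveness of the projection $P_I$ onto geodesics issuing from $y$, bounds $d(\sigma_n^k,x_k)$ through Lemma \ref{l15} and asymptotic regularity, and then lets $k\to\infty$ for fixed $n$ before letting $n\to\infty$. The only difference is cosmetic — you work with $d(\sigma_n^k,x_k)$ via Lemma \ref{l15}\eqref{l15i2} and the triangle inequality, while the paper uses squared distances (the inequality $d^2(P_Ix_k,y)\leqslant 2d^2(x_k,\sigma_n^k)+2d^2(P_I\sigma_n^k,y)$) and repeats the computation of Theorem \ref{tautheokarcher1} verbatim.
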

\begin{proof}
By nonexpansiveness of the projection mapping for all geodesic $I$ issuing from $x$, we have:
\allowdisplaybreaks\begin{eqnarray}
d^2(P_Ix_k,x) &\leqslant &2d^2(P_Ix_k, P_I\sigma_n^k)+2d^2(P_I\sigma_n^k, x) \nonumber\\
&\leqslant &2d^2(x_k, \sigma_n^k)+2d^2(P_I\sigma_n^k, x). \nonumber
\end{eqnarray}
Now the proof of Theorem \ref{tautheokarcher1} can be repeated. The result is concluded by Proposition \ref{weakcondef2}.
\end{proof}
Also for the relation between $\triangle-$mean convergence and $\triangle-$almost convergence of a sequence, we have the following theorem.
\begin{corollary}\label{tautheokarcher2weak}
For the sequence $\{x_n\}$ in Hadamard space $\mathscr H$, if $\sigma_n^k$ defined as the unique minimizer of \eqref{e23} $\triangle-$converges to $y$ uniformly in $k\geqslant 0($or the sequence $\{x_n\}$, $\triangle-$almost converges to $y)$, then $\sigma_n$ defined as the unique minimizer of \eqref{e22}\linebreak $\triangle-$converges to $y$ $($or the sequence $\{x_n\}$, $\triangle-$mean converges to $y)$. Also, $\triangle-$ convergence of $\sigma_n$ to $y$ and \eqref{taucondkarchermean2} imply $\triangle-$ convergence of $\sigma_n^k$ to $y$ uniformly in $k\geqslant 0$.
\end{corollary}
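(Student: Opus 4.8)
The plan is to transcribe the proof of Theorem \ref{tautheokarcher2} into the $\triangle$-convergence setting, in exactly the way Corollary \ref{tautheokarcher1weak} transcribes Theorem \ref{tautheokarcher1}, using Proposition \ref{weakcondef2} as the dictionary. Throughout, I read ``$\sigma_n^k$ $\triangle$-converges to $y$ uniformly in $k\geq 0$'' as: $\displaystyle\lim_{n\to\infty}\sup_{k\geq 0}d\big(y,P_I\sigma_n^k\big)=0$ for every geodesic $I$ issuing from $y$, i.e. the uniform-in-$k$ form of the characterization of $\triangle$-convergence in Proposition \ref{weakcondef2}; the same reading with $k=0$ fixed gives the meaning of ``$\sigma_n$ $\triangle$-converges to $y$''.

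The first assertion is immediate upon specializing to $k=0$. Since $\mathcal{F}_n=\mathcal{F}_n^0$ we have $\sigma_n=\sigma_n^0$, so for every geodesic $I$ issuing from $y$ one has $d(y,P_I\sigma_n)\leq\sup_{k\geq 0}d(y,P_I\sigma_n^k)\to 0$, and Proposition \ref{weakcondef2} yields that $\sigma_n$ $\triangle$-converges to $y$.

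For the converse, fix a geodesic $I$ issuing from $y$ and let $P_I$ be the (nonexpansive) projection onto $I$; note $P_Iy=y$. First I would combine the triangle inequality, nonexpansiveness of $P_I$, and $(a+b)^2\leq 2a^2+2b^2$ to obtain
\begin{equation*}
d^2\big(P_I\sigma_n^k,y\big)\leq 2d^2\big(P_I\sigma_n^k,P_I\sigma_{n+k}\big)+2d^2\big(P_I\sigma_{n+k},y\big)\leq 2d^2\big(\sigma_n^k,\sigma_{n+k}\big)+2d^2\big(P_I\sigma_{n+k},y\big),
\end{equation*}
which plays the role of the opening $CN$-inequality step in the proof of Theorem \ref{tautheokarcher2} (indeed the $CN$-inequality is not even needed here, since $P_I$ fixes $y$). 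The term $d^2(\sigma_n^k,\sigma_{n+k})$ is purely metric, so the chain of estimates in the proof of Theorem \ref{tautheokarcher2} — Part \eqref{l15i1} of Lemma \ref{l15}, then minimality of $\sigma_n^k$ for $\mathcal{F}_n^k$ (step $\star$), then minimality of $\sigma_k$ for $\mathcal{F}_k$ (step $\star\star$) — applies verbatim and gives
\begin{equation*}
d^2\big(\sigma_n^k,\sigma_{n+k}\big)\leq\frac{1}{n+k}\sum_{i=0}^{k-1}\Big(d^2(x_i,\sigma_n^k)-d^2(x_i,\sigma_k)\Big).
\end{equation*}
Taking $\sup_{k\geq 0}$ and using $\{\sigma_{n+k}:k\geq 0\}=\{\sigma_m:m\geq n\}$, one arrives at
\begin{equation*}
\sup_{k\geq 0}d^2\big(P_I\sigma_n^k,y\big)\leq 2\sup_{k\geq 0}\bigg(\frac{1}{n+k}\sum_{i=0}^{k-1}\Big(d^2(x_i,\sigma_n^k)-d^2(x_i,\sigma_k)\Big)\bigg)+2\sup_{m\geq n}d^2\big(P_I\sigma_m,y\big).
\end{equation*}
Letting $n\to\infty$, the first term on the right vanishes by the Tauberian condition \eqref{taucondkarchermean2}, and the second vanishes because $\triangle$-convergence of $\sigma_m$ to $y$ forces $d(P_I\sigma_m,y)\to 0$ via Proposition \ref{weakcondef2}. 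As $I$ was an arbitrary geodesic issuing from $y$, this is exactly the uniform $\triangle$-convergence of $\sigma_n^k$ to $y$.

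The only point requiring care — and the one I would nail down before starting — is the very definition of ``uniform $\triangle$-convergence'': a single geodesic $I$ issuing from $y$ must be fixed at the outset and the whole estimate run inside $I$, so that all suprema over $k$ are of the fixed real-valued quantity $d^2(y,P_I\sigma_n^k)$. Once that convention is pinned down, every remaining step is a literal transcription of the proof of Theorem \ref{tautheokarcher2}, and no new inequality is needed beyond the nonexpansiveness of metric projections onto geodesics, which is already used in the proof of Corollary \ref{tautheokarcher1weak}.
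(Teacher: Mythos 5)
Your proposal is correct and follows essentially the same route as the paper: the paper's proof also reduces the converse direction to the estimate $d^2(P_I\sigma_n^k,y)\leqslant 2d^2(\sigma_n^k,\sigma_{n+k})+2d^2(P_I\sigma_{n+k},y)$ via nonexpansiveness of the projection onto a geodesic $I$ issuing from $y$, then repeats the proof of Theorem \ref{tautheokarcher2} and concludes with Proposition \ref{weakcondef2}, while the forward direction is the $k=0$ specialization. Your write-up merely makes explicit the details the paper leaves implicit (the extraction of the bound on $d^2(\sigma_n^k,\sigma_{n+k})$ and the supremum over $k$ inside a fixed geodesic), which is a faithful elaboration rather than a different argument.
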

\begin{proof}
It is obvious that $\triangle-$almost convergence of the sequence $\{x_n\}$ to $x$ implies $\triangle-$mean convergence of the sequence $\{x_n\}$ to $x$, also via the condition \eqref{taucondkarchermean2} the reverse direction is true by the same proof of Theorem \ref{tautheokarcher2}. Because by nonexpansiveness of the projection mapping for all geodesic $I$ issuing from $x$, we have:
\allowdisplaybreaks\begin{eqnarray}
d^2(P_I\sigma_n^k ,x) &\leqslant &2d^2(P_I\sigma_n^k, P_I\sigma_{n+k})+2d^2(P_I\sigma_{n+k}, x) \nonumber\\
&\leqslant &2d^2(\sigma_n^k, \sigma_{n+k})+2d^2(P_I\sigma_{n+k}, x), \nonumber
\end{eqnarray}
and this completes the proof by Proposition \ref{weakcondef2}.
\end{proof}
We can also obtain suitable Tauberian condition for $\triangle-$mean convergence to\linebreak $\triangle-$ convergence of a sequence in the next corollary.
\begin{corollary}\label{tautheokarcher3weak}
Let $\{x_n\}$ be a sequence in Hadamard space $\mathscr H$, and $\sigma_n$ is the mean sequence defined as the unique minimizer of \eqref{e22}. If $\sigma_n$ $\triangle-$converges to $y$ and\linebreak $nd(x_n,x_{n-1})\longrightarrow 0$ as $n\to\infty$, then $x_n$ $\triangle-$converges to $y$.
\end{corollary}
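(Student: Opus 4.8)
The plan is to follow the proof of Theorem~\ref{tautheokarcher3} as closely as possible, isolating from it a purely metric estimate that does not rely on strong convergence of $\sigma_n$, and then to transfer the conclusion to $\triangle$-convergence by composing with projections onto geodesics, in the same spirit as Corollaries~\ref{tautheokarcher1weak} and~\ref{tautheokarcher2weak}.

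The first step is to show that the Tauberian hypothesis $nd(x_n,x_{n-1})\to 0$ by itself forces $d(x_n,\sigma_{n+1})\to 0$, where $\sigma_{n+1}$ is the unique minimizer of $\mathcal{F}_{n+1}$ (the Karcher mean of $x_0,\dots,x_n$). Indeed, Part~\eqref{l15i2} of Lemma~\ref{l15}, applied with index $n+1$, shift $0$, and point $y=x_n$, gives $d(x_n,\sigma_{n+1})\le\frac{1}{n+1}\displaystyle\sum_{i=0}^{n}d(x_i,x_n)$. Setting $a_i=d(x_i,x_n)$, so that $a_n=0$, Lemma~\ref{l7} rewrites the right-hand side as $\frac{1}{n+1}\displaystyle\sum_{i=1}^{n} i\big(d(x_{i-1},x_n)-d(x_i,x_n)\big)$, which the triangle inequality bounds above by $\frac{1}{n+1}\displaystyle\sum_{i=1}^{n} i\,d(x_{i-1},x_i)$. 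Since $i\,d(x_{i-1},x_i)\to 0$ by hypothesis, its Cesaro average tends to $0$ as well, whence $d(x_n,\sigma_{n+1})\to 0$. This is precisely the chain of inequalities in the proof of Theorem~\ref{tautheokarcher3}, now read as a statement about $d(x_n,\sigma_{n+1})$ instead of $d(x_n,y)$.

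The second step passes to $\triangle$-convergence. Fix an arbitrary geodesic $I$ issuing from $y$, and let $P_I:\mathscr H\to I$ be the metric projection, which is nonexpansive and fixes $y$. Then, by the triangle inequality followed by nonexpansiveness of $P_I$,
$$d(y,P_Ix_n)\le d(y,P_I\sigma_{n+1})+d(P_I\sigma_{n+1},P_Ix_n)\le d(y,P_I\sigma_{n+1})+d(\sigma_{n+1},x_n).$$
The term $d(\sigma_{n+1},x_n)$ tends to $0$ by the first step. For the term $d(y,P_I\sigma_{n+1})$, note that $\{\sigma_{n+1}\}$ is a tail of $\{\sigma_n\}$ and hence also $\triangle$-converges to $y$, so Proposition~\ref{weakcondef2} yields $d(y,P_I\sigma_{n+1})\to 0$. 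Therefore $d(y,P_Ix_n)\to 0$ for every geodesic $I$ issuing from $y$, and a second application of Proposition~\ref{weakcondef2} shows that $x_n$ $\triangle$-converges to $y$.

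I do not expect a genuine obstacle in this argument. The only point requiring attention is that the estimate $d(x_n,\sigma_{n+1})\to 0$ of the first step must be derived from the Tauberian hypothesis alone, without appealing to $d(\sigma_n,y)\to 0$, which is unavailable here since only $\triangle$-convergence of $\sigma_n$ is assumed; once that estimate is in place, nonexpansiveness of the geodesic projections carries out the rest, exactly as in the preceding corollaries of this section.
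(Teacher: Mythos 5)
Your proof is correct and follows essentially the same route as the paper: the same decomposition $d(y,P_Ix_n)\le d(x_n,\sigma_{n+1})+d(y,P_I\sigma_{n+1})$ via nonexpansiveness of $P_I$, the same bound $d(x_n,\sigma_{n+1})\le\frac{1}{n+1}\sum_{i=1}^{n}i\,d(x_{i-1},x_i)\to0$ from Lemma \ref{l15}\eqref{l15i2} and Lemma \ref{l7}, and the same two uses of Proposition \ref{weakcondef2}. You have merely written out in full the details the paper leaves to ``the rest of the proof is similar to Theorem \ref{tautheokarcher3}.''
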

\begin{proof}
By nonexpansiveness of the projection mapping on all geodesic $I$ issuing from $x$, we have:
\allowdisplaybreaks\begin{eqnarray}
d(P_Ix_n ,x) &\leqslant &d(P_Ix_n, P_I\sigma_{n+1})+d(P_I\sigma_{n+1}, x) \nonumber\\
&\leqslant &d(x_n, \sigma_{n+1})+d(P_I\sigma_{n+1}, x). \nonumber
\end{eqnarray} The rest of the proof is similar to the proof of Theorem \ref{tautheokarcher3} and it is concluded from Proposition \ref{weakcondef2}.
\end{proof}
As we stated in the first of this section, we don't know whether $\triangle-$convergence of $\{x_n\}$ implies $\triangle-$almost convergence of $\{x_n\}$ or not. But for $\triangle-$convergence to $\triangle-$mean convergence, we prove the next theorem in some special Hadamard spaces. The following condition as a geometric condition for nonpositive curvature metric spaces has been introduced by Kirk and Panyanak \cite{KIRK20083689} as:

$(\mathbf{Q_4})$: for points $x,y,p,q\in\mathscr H$ and any point $m$ in the segment $[x,y]$
\begin{equation*}
d(p,x)<d(x,q)\ \&\ d(p,y)<d(y,q) \Longrightarrow d(p,m)\leqslant d(m,q).
\end{equation*}
and $(\mathbf{\overline{Q_4}})$ condition as the modification of it introduced by Kakavandi \cite{kakavandi2013} as:

$(\mathbf{\overline{Q_4}})$: for points $x,y,p,q\in\mathscr H$ and any point $m$ in the segment $[x,y]$
\begin{equation*}
d(p,x)\leqslant d(x,q)\ \&\ d(p,y)\leqslant d(y,q) \Longrightarrow d(p,m)\leqslant d(m,q).
\end{equation*}
In fact if for $x,y\in \mathscr H$ set $F(x,y):=\{z\in \mathscr H\ : \ d(x,z)\leqslant d(y,z) \}$, the $(\mathbf{\overline{Q_4}})$ condition is equivalent to $F(x,y)$ is convex for any $x,y\in \mathscr H$. Hilbert spaces, $\Bbb{R}$-trees and any $CAT(0)$ space of constant curvature satisfy $(\mathbf{\overline{Q_4}})$ condition \cite{kakavandi2013,espinola2009}. We need the next lemma before stating the main result.
\begin{lemma}\label{l16}
Let $\mathscr H$ be a Hadamard space and $\{x_n\}$ be a sequence in $\mathscr H$. Let $\{\sigma_n\}$ and $\{\sigma_n^k\}$ be the means defined in Definition \ref{Karcher mean}, for each $k\geqslant 0$, we have:
\begin{enumerate}
\renewcommand{\theenumi}{\roman{enumi}}
\item\label{l16i1} $\sigma_n^k\in \overline{co}\{x_n\}$.
\item\label{l16i2} If $\{x_n\}$ is bounded, then $d\big(\sigma_n,\sigma_n^k\big)\longrightarrow 0$ as $n\rightarrow\infty$.
\end{enumerate}
\end{lemma}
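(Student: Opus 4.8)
The plan is to obtain part \eqref{l16i1} by a projection argument, and then to bootstrap part \eqref{l16i2} from it together with Part \eqref{l15i1} of Lemma \ref{l15}. For \eqref{l16i1}, put $C:=\overline{co}\{x_n\}$, which is a nonempty closed convex subset of $\mathscr H$, hence Chebyshev, so the metric projection $P_C$ is well defined. Suppose toward a contradiction that $\sigma_n^k\notin C$ and set $p:=P_C\sigma_n^k$, so that $d(\sigma_n^k,p)>0$. Since each $x_{k+i}$ with $0\leqslant i\leqslant n-1$ lies in $C$, inequality \eqref{projection} gives $d^2(p,x_{k+i})\leqslant d^2(\sigma_n^k,x_{k+i})-d^2(\sigma_n^k,p)$; averaging over $i$ yields $\mathcal{F}_n^k(p)\leqslant \mathcal{F}_n^k(\sigma_n^k)-d^2(\sigma_n^k,p)<\mathcal{F}_n^k(\sigma_n^k)$, which contradicts the minimality of $\sigma_n^k$. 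Hence $\sigma_n^k\in C$, and taking $k=0$ also gives $\sigma_n\in C$.

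For \eqref{l16i2}, fix $x_0\in\mathscr H$ and $R>0$ with $\{x_n\}\subseteq \overline{B}(x_0,R):=\{x\in\mathscr H:d(x_0,x)\leqslant R\}$. Since $d(\cdot,x_0)$ is convex, this closed ball is convex, so $C\subseteq\overline{B}(x_0,R)$; by part \eqref{l16i1} it follows that $d(x_i,\sigma_n)\leqslant 2R$ and $d(x_i,\sigma_n^k)\leqslant 2R$ for all $i,n$. Next, apply Part \eqref{l15i1} of Lemma \ref{l15} twice: once to the minimizer $\sigma_n^k$ evaluated at $y=\sigma_n$, and once to the minimizer $\sigma_n=\sigma_n^0$ evaluated at $y=\sigma_n^k$. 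Adding the two inequalities and regrouping gives
\begin{equation*}
2\,d^2(\sigma_n,\sigma_n^k)\leqslant \big(\mathcal{F}_n^k(\sigma_n)-\mathcal{F}_n(\sigma_n)\big)+\big(\mathcal{F}_n(\sigma_n^k)-\mathcal{F}_n^k(\sigma_n^k)\big).
\end{equation*}
For fixed $k$ and $n\geqslant k$ one has $\mathcal{F}_n^k(z)-\mathcal{F}_n(z)=\frac{1}{n}\big(\sum_{i=n}^{n+k-1}d^2(x_i,z)-\sum_{i=0}^{k-1}d^2(x_i,z)\big)$, a difference of only $2k$ terms, each at most $4R^2$ by the previous bound, so $|\mathcal{F}_n^k(z)-\mathcal{F}_n(z)|\leqslant 8kR^2/n$ for $z\in\{\sigma_n,\sigma_n^k\}$. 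Hence $2\,d^2(\sigma_n,\sigma_n^k)\leqslant 16kR^2/n\to 0$ as $n\to\infty$, which is \eqref{l16i2}.

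I expect \eqref{l16i1} to be routine once one thinks of projecting the Karcher mean onto the closed convex hull and invoking \eqref{projection}. The one place needing an idea is \eqref{l16i2}: the key observation is that, with $k$ held fixed, the Cesaro average $\mathcal{F}_n$ and the Vallee-Poussin average $\mathcal{F}_n^k$ differ in only $O(k)$ of their $n$ summands, so the discrepancy is $O(k/n)$ provided a uniform bound on the summands is available — and such a bound is exactly what part \eqref{l16i1} supplies, via convexity of balls and boundedness of $\{x_n\}$. The other ingredient is the symmetric twofold use of Lemma \ref{l15}\eqref{l15i1} to estimate $d^2(\sigma_n,\sigma_n^k)$ from both directions simultaneously.
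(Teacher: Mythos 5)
Your proof is correct and follows essentially the paper's route: part \eqref{l16i1} is the same projection argument via \eqref{projection} and minimality of $\sigma_n^k$, and part \eqref{l16i2} rests on the same two ingredients the paper uses, namely Lemma \ref{l15}\eqref{l15i1} together with the observation that $\mathcal{F}_n$ and $\mathcal{F}_n^k$ differ in only $O(k)$ of $n$ uniformly bounded summands. Your symmetric double application of Lemma \ref{l15}\eqref{l15i1} (versus the paper's single application plus plain minimality of $\sigma_n$) is only a cosmetic variation, and the reuse of $x_0$ as ball center is a harmless notational clash.
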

\begin{proof}
\eqref{l16i1}.
Let $P:\mathscr H\longrightarrow \overline{co}\{x_n\}$ be the projection map. On the one hand, by the inequality \eqref{projection} for any $i$, we have:
\begin{equation*}
d^2(x_i,\sigma_n^k)\geqslant d^2(x_i,P\sigma_n^k)+ d^2(P\sigma_n^k, \sigma_n^k).
\end{equation*}
On the other hand, by definition of $\sigma_n^k$, we have:
\begin{equation*}
\displaystyle\frac{1}{n}\sum_{i=0}^{n-1} d^2(x_{k+i}, \sigma_n^k) \leqslant \displaystyle\frac{1}{n}\sum_{i=0}^{n-1} d^2(x_{k+i}, P\sigma_n^k).
\end{equation*}
Hence we obtain $d^2(P\sigma_n^k, \sigma_n^k)=0$, which is the requested result.\\
\eqref{l16i2}.
By Lemma \ref{l15}, Part \eqref{l15i1}, and definition of $\sigma_n$ in inequality $\star$, we get:
\allowdisplaybreaks\begin{eqnarray}
d^2\big(\sigma_n,\sigma_n^k\big)&\leqslant &\frac{1}{n}\displaystyle\sum_{i=0}^{n-1}d^2\big(x_{k+i},\sigma_n\big)- \frac{1}{n}\displaystyle\sum_{i=0}^{n-1}d^2\big(x_{k+i},\sigma_n^k\big) \nonumber \\
&\leqslant &\frac{1}{n}\displaystyle\sum_{i=0}^{n-1}d^2\big(x_i,\sigma_n\big)- \frac{1}{n}\displaystyle\sum_{i=0}^{n-1}d^2\big(x_{k+i},\sigma_n^k\big) \nonumber \\
&&+ \frac{1}{n}\displaystyle\sum_{i=0}^{k-1} d^2\big(x_{n+i},\sigma_n\big) \nonumber \\
&\overset{\star}{\leqslant} &\frac{1}{n}\displaystyle\sum_{i=0}^{n-1}d^2\big(x_i,\sigma_n^k\big)- \frac{1}{n}\displaystyle\sum_{i=0}^{n-1}d^2\big(x_{k+i},\sigma_n^k\big) \nonumber \\
&&+\frac{1}{n}\displaystyle\sum_{i=0}^{k-1} d^2\big(x_{n+i},\sigma_n\big) \nonumber \\
&\leqslant & \frac{1}{n}\displaystyle\sum_{i=0}^{n-1}d^2\big(x_{k+i},\sigma_n^k\big)- \frac{1}{n}\displaystyle\sum_{i=0}^{n-1}d^2\big(x_{k+i},\sigma_n^k \big) \nonumber\\
&& +\frac{1}{n}\displaystyle\sum_{i=0}^{k-1} d^2\big(x_i,\sigma_n^k\big)+\frac{1}{n}\displaystyle\sum_{i=0}^{k-1} d^2\big(x_{n+i},\sigma_n\big). \nonumber
\end{eqnarray}
Now boundedness of the sequence $\{x_n\}$ (and hence boundedness of $\{\sigma_n\}$ and $\{\sigma_n^k\}$) completes the proof.
\end{proof}
\begin{theorem}\label{tautheokarcher4}
Let $\{x_n\}$ be a sequence in Hadamard space $\mathscr H$ that satisfy $(\overline{Q_4})$ condition, and $\sigma_n$ be the mean sequence defined as the unique minimizer of \eqref{e22}. If $\{x_n\}$ $\triangle-$converges to $y$, then $\sigma_n$ $\triangle-$converges to $y($or $\{x_n\}$ $\triangle-$mean converges to $y)$.
\end{theorem}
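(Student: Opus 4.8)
The plan is to show that \emph{every} $\triangle$-cluster point of the (bounded) sequence $\{\sigma_n\}$ must equal $y$, and then to upgrade this to full $\triangle$-convergence through the quasilinearization characterization of Proposition~\ref{weakcondef3}. First I would record the setup: since $\{x_n\}$ is $\triangle$-convergent it is bounded, so by Part~\eqref{l16i1} of Lemma~\ref{l16} the means $\sigma_n$ lie in $\overline{co}\{x_n\}$ and form a bounded sequence; hence (Lemma~\ref{lconsubseq}) every subsequence of $\{\sigma_n\}$ has a further subsequence $\triangle$-converging to some point $p$. It then suffices to prove that any such $p$ equals $y$: if $\{\sigma_n\}$ failed to $\triangle$-converge to $y$, Proposition~\ref{weakcondef3} would supply a $z_0\in\mathscr H$ and an $\varepsilon>0$ with $\langle y\sigma_{n_j},yz_0\rangle\geqslant\varepsilon$ along a subsequence; extracting a $\triangle$-convergent further subsequence $\sigma_{n_{j_l}}\xrightarrow{\triangle}p=y$ and applying Proposition~\ref{weakcondef3} again would force $\limsup_l\langle y\sigma_{n_{j_l}},yz_0\rangle\leqslant0$, a contradiction. (This indirect route through Proposition~\ref{weakcondef3} is needed because asymptotic centers are not stable under passing to subsequences, so a naive "subsequence principle" stated directly via asymptotic centers is not available.)

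Next I would locate $p$ in the tails. Fix $N\geqslant0$ and set $C_N:=\overline{co}\{x_m:m\geqslant N\}$, a closed convex set, hence $\triangle$-closed by Lemma~\ref{lconsubseq}. Repeating the proof of Part~\eqref{l16i1} of Lemma~\ref{l16} with the projection onto $C_N$ in place of the projection onto $\overline{co}\{x_n\}$ (note $x_N,\dots,x_{N+n-1}\in C_N$) gives $\sigma_n^N\in C_N$; and Part~\eqref{l16i2} of Lemma~\ref{l16} yields $d(\sigma_n,\sigma_n^N)\to0$, so $d(\sigma_n,C_N)\to0$ as $n\to\infty$. Putting $q_l:=P_{C_N}\sigma_{n_l}\in C_N$ we obtain $d(\sigma_{n_l},q_l)\to0$, and since $|d(z,q_l)-d(z,\sigma_{n_l})|\leqslant d(q_l,\sigma_{n_l})\to0$ for every $z$, the sequences $\{q_l\}$ and $\{\sigma_{n_l}\}$ have the same asymptotic radius functional along every subsequence, so $q_l\xrightarrow{\triangle}p$; $\triangle$-closedness of $C_N$ then gives $p\in C_N$. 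As $N$ is arbitrary, $p\in\bigcap_{N\geqslant0}C_N$.

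Finally I would invoke $(\overline{Q_4})$ to conclude $p=y$. Suppose $p\neq y$. By $(\overline{Q_4})$ the sets $A:=F(y,p)=\{z:d(y,z)\leqslant d(p,z)\}$ and $B:=F(p,y)=\{z:d(p,z)\leqslant d(y,z)\}$ are closed and convex, and $A\cup B=\mathscr H$. If infinitely many $x_n$ lay in $B$, then along such a subsequence $\{x_{n_k}\}$ one has $\limsup_k d(x_{n_k},p)\leqslant\limsup_k d(x_{n_k},y)$; but $\{x_{n_k}\}$ still $\triangle$-converges to $y$, so $y$ realizes the asymptotic radius of $\{x_{n_k}\}$, forcing equality, whence $p$ realizes it too and $p\in A(\{x_{n_k}\})=\{y\}$, a contradiction. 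Hence $x_n\in A$ for all $n\geqslant N_0$ and some $N_0$, so $C_{N_0}\subseteq A$ by closedness and convexity of $A$; then $p\in\bigcap_N C_N\subseteq C_{N_0}\subseteq A$ gives $d(y,p)\leqslant d(p,p)=0$, again a contradiction. Therefore $p=y$, which finishes the proof.

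The main obstacle is exactly this last step: without a convexity (hence a weak separation) property for the metric half-spaces $F(y,p)$, $F(p,y)$, there is no reason a $\triangle$-cluster point of the Karcher means should coincide with the $\triangle$-limit of $\{x_n\}$ — this is precisely what remains open in general Hadamard spaces, and the role of $(\overline{Q_4})$ is to furnish that property. A secondary, purely technical care point is the passage from "cluster points equal $y$" to "$\sigma_n\xrightarrow{\triangle}y$", which must be routed through Proposition~\ref{weakcondef3} rather than through asymptotic centers directly.
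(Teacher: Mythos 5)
Your proof is correct, and its skeleton is essentially the paper's: both arguments reduce the theorem to showing that any $\triangle$-cluster point $p$ of the bounded sequence $\{\sigma_n\}$ equals $y$, and both do so by combining Lemma \ref{l16} (the Karcher mean of a tail block lies in the closed convex hull of that tail, and $d(\sigma_n,\sigma_n^N)\to 0$), the $\triangle$-closedness of closed convex sets (Lemma \ref{lconsubseq}), and the $(\overline{Q_4})$-convexity of the half-space $F(y,p)$ containing a tail of $\{x_n\}$, to force $p\in F(y,p)$ and hence $d(y,p)\leqslant d(p,p)=0$. The differences are local but worth recording. First, the paper obtains the tail inclusion $x_n\in F(y,p)$ for large $n$ directly from Proposition \ref{weakcondef3} applied to $x_n\overset{\triangle}{\longrightarrow}y$ (the positive term $d^2(p,y)=\delta^2$ makes $d^2(x_n,y)-d^2(x_n,p)\leqslant 0$ eventually), whereas you derive it from an asymptotic-center dichotomy: if infinitely many $x_n$ satisfied $d(p,x_n)\leqslant d(y,x_n)$, then $p$ would also realize the asymptotic radius of that subsequence, contradicting uniqueness of its asymptotic center $y$; both routes are valid, the paper's being slightly shorter. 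Second, you instead use Proposition \ref{weakcondef3} for the upgrade from ``every $\triangle$-cluster point of $\{\sigma_n\}$ is $y$'' to $\sigma_n\overset{\triangle}{\longrightarrow}y$, a step the paper leaves implicit, and you also spell out the adaptation of the proof of Part \ref{l16i1} of Lemma \ref{l16} needed to place $\sigma_n^N$ in $\overline{co}\{x_m:m\geqslant N\}$ (the lemma as stated only mentions $\overline{co}\{x_n\}$), which the paper cites somewhat loosely when it asserts $\sigma_n^N\in F(y,v)$; so your write-up is, if anything, more complete at those two points. Your detour through the sets $C_N$ and the projections $q_l$ is harmless but could be shortened by arguing, as the paper does, that $\sigma_{n_l}^N\overset{\triangle}{\longrightarrow}p$ and that the closed convex set $F(y,p)$ containing the tail (hence containing $\sigma_{n_l}^N$ for large $N$) is $\triangle$-closed.
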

\begin{proof}
First note that by $\triangle-$convergence of $\{x_n\}$, this sequence and hence $\{\sigma_n\}$ and $\{\sigma_n^k\}$ for each $k\geqslant 1$ are bounded, therefore by Part \ref{l16i2} of Lemma \ref{l16} for each $k\geqslant 1$, $d\big(\sigma_n,\sigma_n^k\big)\longrightarrow 0$ as $n\rightarrow\infty$. Also, boundedness of $\{\sigma_n\}$ implies that there exists a subsequence $\{\sigma_{n_i}\}$ of $\{\sigma_n\}$ such that $\{\sigma_{n_i}\}$ $\bigtriangleup -converges$ to $v\in \mathscr H$. Since $d\big(\sigma_n,\sigma_n^k\big)\longrightarrow 0$ for each $k\geqslant 1$, we have $\{\sigma_{n_i}^k\}$ $\bigtriangleup -converges$ to $v$ for each $k\geqslant 1$. If we show that $v=y$, then the proof is complete. Suppose to the contrary, i.e., there is a $\delta>0$ such that
\begin{equation*}
d(v,y)=\delta.
\end{equation*}
On the other hand, by $\bigtriangleup -convergence$ of $x_n$ to $y$, using Proposition \ref{weakcondef3}, we have:
\begin{equation*}
\displaystyle\limsup_{n\rightarrow\infty} \Big(d^2(v,y)+d^2(x_n,y)-d^2(x_n,v)\Big)\leqslant 0.
\end{equation*}
Hence there exists $N$ such that for any $n\geqslant N$
\begin{equation*}
d^2(x_n,y)-d^2(x_n,v)\leqslant 0.
\end{equation*}
Now we know that by $(\overline{Q_4})$ condition, the set $F(y,v)$ is convex. By Part \ref{l16i1} of Lemma \ref{l16}, $\sigma_n^N\in F(y,v)$. Also by continuity of metric function, $F(y,v)$ is closed and hence by Lemma \ref{lconsubseq}, it is $\bigtriangleup-$closed. This facts since $\sigma_{n_i}^N$ $\bigtriangleup -converges$ to $v$, replacing $n$ with $n_i$ and by $\bigtriangleup$-closedness of $F(y,v)$, imply that $d^2(y,v)=0$ i.e., $y=v$, which is a contradiction. This completes the proof.
\end{proof}
\begin{remark}
\noindent It is easy to check that Corollaries \ref{tautheokarcher1weak}, \ref{tautheokarcher2weak} and Theorem \ref{tautheokarcher4} for $\bigtriangleup -convergence$ remains true with the analogous arguments for curves in Hadamard spaces.
\end{remark}
\section{Almost Convergence of Almost Periodic Sequences}
In \cite{bohr1947almost,lorentz1948} we see that every almost periodic real sequence is almost convergent. Now we prove it in Hadamard spaces.  
\begin{definition}[Periodic and Almost Periodic Sequences]
Let $\{x_n\}$ be a sequence in metric space $(X,d)$, we call this sequence is periodic with the period $p$ (or $p-$periodic) if there exists a positive integer $p$ such that $x_{n+p}=x_n$ for all $n$.\\
A sequence $\{x_n\}$ is called almost periodic if for each $\epsilon>0$ there are natural numbers $L=L(\epsilon)$ and $N=N(\epsilon)$ such that any interval $(k,k+L)$ where $k\geqslant 0$ contains at least one integer $p$ satisfying
\begin{equation}
d(x_{n+p},x_n)<\epsilon, \quad\quad \forall n\geqslant N.
\end{equation}
\end{definition}
We need the next lemma for proving almost convergence of almost periodic sequences in Hadamard spaces.
\begin{lemma}\label{lemstconfun2}$($see \cite[Lemma 4.3]{khatibzadehpouladi2019}$)$
Let $(\mathscr H, d)$ be a Hadamard space and $\{f_n^k\}_{k,n}$ be a sequence of convex functions on $\mathscr H$. If $\{x_n^k\}_{k,n}$ is a sequence of minimum points of $\{f_n^k\}_{k,n}$ and $x$ is the unique minimizer of the strongly convex function $f$, satisfying:
\begin{enumerate}
\renewcommand{\theenumi}{\Roman{enumi}}
\item\label{lemstconfun2i1} the sequence $\{f_n^k\}$ is pointwise convergent to $f$ as $n$ tends to infinity uniformly in $k\geqslant 0$,
\item\label{lemstconfun2i2} $\displaystyle\limsup_{n\rightarrow\infty} \sup_{k\geqslant 0}\big(f(x_n^k)-f_n^k(x_n^k)\big)\leqslant 0$,
\end{enumerate}
then $x_n^k$ converges to $x$ uniformly in $k\geqslant 0$ as $n\rightarrow \infty$.
\end{lemma}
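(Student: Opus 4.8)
The plan is to use the strong convexity of $f$ to convert control of functional values into metric control, and then to estimate the functional excess $f(x_n^k)-f(x)$ uniformly in $k$ by splitting it into three manageable pieces. Say $f$ is $\gamma$-strongly convex with $\gamma>0$.

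First I would let $m_n^k$ denote the midpoint of the geodesic segment $[x_n^k,x]$ and apply the strong convexity inequality with $\lambda=\frac12$, which gives
\begin{equation*}
f(m_n^k)\leqslant \tfrac12 f(x_n^k)+\tfrac12 f(x)-\tfrac14\gamma\, d^2(x_n^k,x).
\end{equation*}
Since $x$ is the global minimizer of $f$ we have $f(x)\leqslant f(m_n^k)$, and rearranging yields the key estimate
\begin{equation*}
\frac{\gamma}{2}\, d^2(x_n^k,x)\leqslant f(x_n^k)-f(x),
\end{equation*}
whose right-hand side is automatically nonnegative. Hence it suffices to show $\sup_{k\geqslant0}\big(f(x_n^k)-f(x)\big)\to0$ as $n\to\infty$.

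For this I would use the telescoping decomposition
\begin{equation*}
f(x_n^k)-f(x)=\big(f(x_n^k)-f_n^k(x_n^k)\big)+\big(f_n^k(x_n^k)-f_n^k(x)\big)+\big(f_n^k(x)-f(x)\big).
\end{equation*}
The middle term is $\leqslant 0$ for every $n,k$ because $x_n^k$ minimizes $f_n^k$; the last term satisfies $\sup_k\big(f_n^k(x)-f(x)\big)\to 0$ by hypothesis \eqref{lemstconfun2i1} (pointwise convergence at the single point $x$, uniform in $k$); and the first term has $\limsup_n\sup_k$ bounded by $0$, which is exactly hypothesis \eqref{lemstconfun2i2}. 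Combining, $\limsup_{n\to\infty}\sup_{k\geqslant0}\big(f(x_n^k)-f(x)\big)\leqslant 0$, and since the quantity is nonnegative it tends to $0$. Feeding this back into the strong convexity estimate gives $\sup_{k\geqslant0} d^2(x_n^k,x)\leqslant \tfrac{2}{\gamma}\sup_{k\geqslant0}\big(f(x_n^k)-f(x)\big)\to 0$, i.e.\ $x_n^k\to x$ uniformly in $k$.

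The argument is essentially routine once the strong convexity inequality is in hand; the only point requiring care will be the bookkeeping of the uniformity in $k$ through the three-term split — in particular, noticing that the minimizing property kills the middle term pointwise in $(n,k)$, so that no uniform control of the functions $f_n^k$ themselves (beyond pointwise convergence at $x$ and hypothesis \eqref{lemstconfun2i2}) is needed.
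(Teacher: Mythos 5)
Your proof is correct: the quadratic growth estimate $\frac{\gamma}{2}\,d^2(x_n^k,x)\leqslant f(x_n^k)-f(x)$ obtained from strong convexity at the minimizer $x$, combined with the three-term split in which the minimizing property of $x_n^k$ makes the middle term nonpositive, hypothesis \eqref{lemstconfun2i1} at the single point $x$ controls the last term, and hypothesis \eqref{lemstconfun2i2} controls the first, does give $\sup_{k\geqslant 0}d^2(x_n^k,x)\to 0$ with the uniformity in $k$ correctly tracked. Note that the present paper does not prove this lemma itself but quotes it from \cite[Lemma 4.3]{khatibzadehpouladi2019}; your argument is the standard route such a proof takes, so there is nothing to add.
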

\begin{theorem}
Let $\{x_n\}$ be an almost periodic sequence in Hadamard space $\mathscr H$. Then the sequence $\{x_n\}$ is almost convergent.
\end{theorem}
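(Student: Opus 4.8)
The plan is to apply Lemma~\ref{lemstconfun2} with $f_n^k:=\mathcal{F}_n^k$, the functions in \eqref{e23} (each of which is $1$-strongly convex, in particular convex), and $x_n^k:=\sigma_n^k$, their minimizers. If we can produce a strongly convex limit function $f$ with (unique) minimizer $\bar x$ and verify hypotheses \ref{lemstconfun2i1}--\ref{lemstconfun2i2} of that lemma, then $\sigma_n^k\to\bar x$ uniformly in $k\geqslant 0$, which is exactly the assertion that $\{x_n\}$ almost converges (to $\bar x$).

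A preliminary observation is that an almost periodic sequence is bounded. Fixing $\epsilon=1$ and the associated $L,N$, the set $P:=\{p\geqslant 1: d(x_{m+p},x_m)<1\text{ for all }m\geqslant N\}$ meets every block of $L$ consecutive positive integers; hence for $n$ large one may choose $p\in P$ with $n-p\in\{N,N+1,\dots,N+L\}$, and then $d(x_n,x_0)\leqslant d(x_{(n-p)+p},x_{n-p})+d(x_{n-p},x_0)<1+\max_{0\leqslant j\leqslant L}d(x_{N+j},x_0)$. Thus $R:=\sup_n d(x_n,x_0)<\infty$.

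The core of the argument is an equi--almost convergence estimate: for every $z$ the sequence $\mathcal{F}_n^k(z)$ converges as $n\to\infty$ to a limit $f(z)$ independent of $k$, and the convergence is uniform in $k\geqslant 0$ and in $z$ ranging over any fixed closed ball $\overline{B}(x_0,\rho)$. To prove it, given $\epsilon>0$ pick $\delta$ of order $\epsilon/\rho$ and the corresponding $L=L(\delta),N=N(\delta)$; for $k\leqslant k'$ pick an integer $p$ in the interval $(k'-k,k'-k+L)$ with $d(x_{m+p},x_m)<\delta$ for all $m\geqslant N$, and write $k+p=k'+t$ with $0\leqslant t<L$. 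Using $|d^2(a,z)-d^2(b,z)|\leqslant 4\rho\, d(a,b)$ for $a,b,z\in\overline{B}(x_0,\rho)$, the fact that all but at most $N$ of the indices $k+i$ exceed $N$, and that shifting a window of length $n$ by $t$ alters the corresponding mean by at most $8\rho^2 t/n$, one obtains
\[
\big|\mathcal{F}_n^k(z)-\mathcal{F}_n^{k'}(z)\big|\ \leqslant\ 4\rho\,\delta+\frac{8\rho^2(N+L)}{n}\ \leqslant\ \epsilon\qquad(n\geqslant n_0(\epsilon)),
\]
with $n_0(\epsilon)$ depending only on $\epsilon$ and $\rho$. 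Feeding this into the identity $\mathcal{F}_{mn}^k(z)=\frac1m\sum_{j=0}^{m-1}\mathcal{F}_n^{k+jn}(z)$ gives $|\mathcal{F}_{mn}^k(z)-\mathcal{F}_n^k(z)|\leqslant\epsilon$ for $n\geqslant n_0$ and all $m$; comparing $\mathcal{F}_n^k$ and $\mathcal{F}_{n'}^k$ through $\mathcal{F}_{nn'}^k$ then shows $\{\mathcal{F}_n^k(z)\}_n$ is Cauchy uniformly in $k$ and $z$, hence converges to some $f(z)$, which is $k$-independent by the displayed inequality and is approached uniformly in $k$ and in $z\in\overline{B}(x_0,\rho)$. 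Since $\rho$ is arbitrary, this yields hypothesis \ref{lemstconfun2i1}: $\mathcal{F}_n^k\to f$ pointwise on $\mathscr H$, uniformly in $k\geqslant 0$.

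Finally one checks the remaining points. Letting $n\to\infty$ in the $1$-strong convexity inequality for $\mathcal{F}_n^k$ shows $f$ is $1$-strongly convex; since $f$ is a limit, uniform on bounded sets, of the continuous functions $\mathcal{F}_n^k$, it is continuous, hence lsc, so by the fact recalled in Section~2 it has a unique minimizer $\bar x$. For hypothesis \ref{lemstconfun2i2}, by Part~\ref{l16i1} of Lemma~\ref{l16} and convexity of metric balls in a $CAT(0)$ space one has $\sigma_n^k\in\overline{co}\{x_n\}\subseteq\overline{B}(x_0,R)$, whence
\[
\sup_{k\geqslant 0}\big(f(\sigma_n^k)-\mathcal{F}_n^k(\sigma_n^k)\big)\ \leqslant\ \sup_{k\geqslant 0}\ \sup_{z\in\overline{B}(x_0,R)}\big|f(z)-\mathcal{F}_n^k(z)\big|\ \longrightarrow\ 0\quad\text{as }n\to\infty,
\]
which is even stronger than \ref{lemstconfun2i2}. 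Lemma~\ref{lemstconfun2} then gives $\sigma_n^k\to\bar x$ uniformly in $k\geqslant 0$, i.e. $\{x_n\}$ is almost convergent. The one genuinely delicate step is the equi--almost convergence estimate: it is the classical Bohr--Lorentz reasoning for scalar almost periodic sequences, and the real work is to carry it out so that the rate of convergence depends on $\epsilon$ (through $L,N$) and on the radius $\rho$, but not on $k$ or on the point $z$ --- which is precisely what makes the moving-argument hypothesis \ref{lemstconfun2i2} accessible.
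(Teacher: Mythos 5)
Your proposal is correct and rests on the same skeleton as the paper's proof: both define the Karcher functionals $\mathcal{F}_n^k$, pass to a strongly convex limit function with unique minimizer, and invoke Lemma \ref{lemstconfun2} to upgrade convergence of the functions to uniform (in $k$) convergence of their minimizers $\sigma_n^k$, which is exactly almost convergence. The difference is in how the hypotheses of that lemma are secured. The paper obtains hypothesis \ref{lemstconfun2i1} by citing that $\{d(x_n,x)\}$, hence $\{d^2(x_n,x)\}$, is an almost periodic scalar sequence and then appealing to Lorentz's theorem for its almost convergence, and it disposes of hypothesis \ref{lemstconfun2i2} only by reference to the ``analogous argument'' of Theorem 4.4 in the companion arXiv paper; you instead carry out the Bohr--Lorentz estimate by hand and, crucially, strengthen it to convergence uniform in $z$ over closed balls, which together with $\sigma_n^k\in\overline{co}\{x_n\}\subseteq\overline{B}(x_0,R)$ (boundedness of an almost periodic sequence, which you also prove) makes \ref{lemstconfun2i2} an immediate consequence rather than a cited one. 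What your route buys is self-containedness and an explicit, quantitative handle on the moving-argument condition \ref{lemstconfun2i2}, at the cost of redoing the classical scalar argument; what the paper's route buys is brevity by outsourcing both the scalar almost convergence and the verification of \ref{lemstconfun2i2} to the literature. Minor remarks: your localization of $\sigma_n^k$ could be obtained even more directly from Lemma \ref{l15}\eqref{l15i2} (giving $d(\sigma_n^k,x_0)\leqslant R$ without invoking Lemma \ref{l16}), and the constants in your estimate should account for $d(x_m,z)\leqslant R+\rho$ when $z\in\overline{B}(x_0,\rho)$ with $\rho$ possibly different from $R$ --- but these are cosmetic and do not affect correctness.
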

\begin{proof}
Since $\{x_n\}$ is almost periodic, by \cite[Proposition 3.3]{khatibzadehpouladi2019} for each $x$, $\{d(x_n,x)\}$ is almost periodic, also it is easy to check that for each $x$, $\{d^2(x_n,x)\}$ is almost periodic. By \cite{lorentz1948}(see also \cite{bohr1947almost}) the scalar sequence $\{d^2(x_n,x)\}$ is almost convergent for all $x\in \mathscr H$. Define:
\begin{equation}\label{eqmainresu1}
\mathcal{F}_n^k(x):=\displaystyle\frac{1}{n}\sum_{i=0}^{n-1}d^2(x_{k+i},x),
\end{equation}
and
\begin{equation}\label{eqmainresu2}
\mathcal{F}(x):=\displaystyle\lim_{n\rightarrow\infty}\frac{1}{n}\sum_{i=0}^{n-1}d^2(x_{k+i},x)\quad \text{uniformly in}\ k\geqslant 0.
\end{equation}
Almost convergence of $\big\{d^2(x_n,x)\big\}$ for any $x\in \mathscr H$ shows that \eqref{eqmainresu2} is well defined. By the strong convexity of $d^2(\cdot,x)$, the functions $\mathcal{F}_n^k$ and $\mathcal{F}$ are strongly convex and therefore have unique minimizers $\sigma_n^k$ and $\sigma$ respectively. By analogous argument of \cite[Theorem 4.4]{khatibzadehpouladi2019} using of Lemma \ref{lemstconfun2}, we conclude that $\sigma_n^k$ converges to $\sigma$ uniformly in $k\geqslant 0$ as $n\rightarrow \infty$ or the sequence $\{x_n\}$ is almost convergent and this completes the proof.
\end{proof}
Every $N$-periodic sequence is almost periodic and by the previous theorem, it is almost convergent. We prove that it almost converges to the mean of its $N$ first points. 
\begin{theorem}
Let $\{x_n\}$ be a $N-$periodic sequence in Hadamard space $\mathscr H$. Then the sequence $\{x_n\}$ is almost convergent to $\sigma_N$ defined in Definition \ref{Karcher mean}.
\end{theorem}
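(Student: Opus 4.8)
The plan is to exploit the fact that an $N$-periodic sequence takes only the finitely many values $x_0,\ldots,x_{N-1}$, so that over any block of $N$ consecutive indices the averaging functional $\mathcal{F}_N^k$ coincides \emph{exactly} with $\mathcal{F}_N$, and then to estimate the $o(1)$ discrepancy coming from the leftover indices when $n$ is not a multiple of $N$.

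First I would record the elementary consequence of periodicity: for every $k\ge 0$, every $x\in\mathscr H$ and every $q\ge 1$, writing $i=jN+i'$ gives
\begin{equation*}
\sum_{i=0}^{qN-1} d^2(x_{k+i},x)=\sum_{j=0}^{q-1}\sum_{i'=0}^{N-1} d^2(x_{k+i'},x)=q\sum_{i=0}^{N-1} d^2(x_i,x),
\end{equation*}
since $x_{k+jN+i'}=x_{k+i'}$ and since summing over one full period is insensitive to the cyclic shift by $k$. Hence $\mathcal{F}_{qN}^k=\mathcal{F}_N$ for all $k$, and in particular $\sigma_{qN}^k=\sigma_N$ for all $k$ and all $q\ge 1$. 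For a general $n$ I would write $n=qN+r$ with $0\le r<N$ and split
\begin{equation*}
\mathcal{F}_n^k(x)=\frac{qN}{n}\,\mathcal{F}_N(x)+\frac{1}{n}\sum_{i=qN}^{qN+r-1} d^2(x_{k+i},x).
\end{equation*}

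Next I would apply Lemma \ref{l15}\eqref{l15i1} with $y=\sigma_N$, obtaining $d^2(\sigma_n^k,\sigma_N)\le \mathcal{F}_n^k(\sigma_N)-\mathcal{F}_n^k(\sigma_n^k)$. Because the remainder term in the displayed splitting is nonnegative and $\sigma_N$ minimizes $\mathcal{F}_N$, one has $\mathcal{F}_n^k(\sigma_n^k)\ge \frac{qN}{n}\mathcal{F}_N(\sigma_n^k)\ge \frac{qN}{n}\mathcal{F}_N(\sigma_N)$, while the splitting of $\mathcal{F}_n^k(\sigma_N)$ makes the $\frac{qN}{n}\mathcal{F}_N(\sigma_N)$ contributions cancel, leaving
\begin{equation*}
d^2(\sigma_n^k,\sigma_N)\le \frac{1}{n}\sum_{i=qN}^{qN+r-1} d^2(x_{k+i},\sigma_N)\le \frac{r}{n}\max_{0\le j\le N-1} d^2(x_j,\sigma_N)\le \frac{N}{n}\max_{0\le j\le N-1} d^2(x_j,\sigma_N).
\end{equation*}
Since the right-hand side tends to $0$ as $n\to\infty$ and is independent of $k$, this gives $\sigma_n^k\to\sigma_N$ uniformly in $k\ge 0$, i.e.\ $\{x_n\}$ almost converges to $\sigma_N$.

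There is no genuine obstacle here; the only point requiring care is that the boundary contribution from the $r<N$ leftover indices must vanish \emph{uniformly} in $k$, and this is immediate precisely because the $N$-periodic sequence ranges over the finite set $\{x_0,\ldots,x_{N-1}\}$, so $\max_{0\le j\le N-1} d^2(x_j,\sigma_N)$ is a finite constant not depending on $k$. (Alternatively, the same conclusion follows from Lemma \ref{lemstconfun2} applied to $f_n^k=\mathcal{F}_n^k$, $f=\mathcal{F}_N$: the splitting above yields $\mathcal{F}_n^k\to\mathcal{F}_N$ pointwise uniformly in $k$, and, using Lemma \ref{l16}\eqref{l16i1} to keep the points $\sigma_n^k$ in a fixed bounded set, it also yields $\limsup_{n\to\infty}\sup_{k\ge 0}\big(\mathcal{F}_N(\sigma_n^k)-\mathcal{F}_n^k(\sigma_n^k)\big)\le 0$.)
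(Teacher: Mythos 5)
Your proposal is correct and follows essentially the same route as the paper: Lemma \ref{l15}\eqref{l15i1} with $y=\sigma_N$, the decomposition $n=qN+r$, shift-invariance of the sum over a full period, and the minimizing property of $\sigma_N$, ending with the boundary bound of order $N/n$ that is uniform in $k$. Your write-up merely makes explicit (via $\max_{0\le j\le N-1}d^2(x_j,\sigma_N)$) the finiteness of the supremum that the paper leaves implicit in its final limit.
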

\begin{proof}
In Definition \ref{Karcher mean}, we see that $\sigma_n$ or the Karcher mean of $x_0,\cdots,x_{n-1}$ is the unique minimizer of the function \eqref{e22} and $\sigma_n^k$ or the karcher mean of $x_k,\cdots,x_{k+n-1}$ is the unique minimizer of the function \eqref{e23}. By part \eqref{l15i1} of Lemma \ref{l15}, we have:
\begin{equation*}
d^2\big( \sigma_n^k ,\sigma_N\big)\leqslant \frac{1}{n}\displaystyle\sum_{i=0}^{n-1} d^2(x_{k+i},\sigma_N)-\frac{1}{n}\displaystyle\sum_{i=0}^{n-1} d^2(x_{k+i},\sigma_n^k).
\end{equation*}
Without loss of generality, for all $n$ we can suppose that $n=tN+r$, $0\leq r<N$. Now by $N-$periodicity of the sequence $\{x_n\}$ in step $\star$ and the definition of $\sigma_N$ in step $\star\star$, we obtain:
\allowdisplaybreaks\begin{eqnarray}
d^2\big( \sigma_n^k ,\sigma_N\big) &\leqslant & \frac{1}{n}\displaystyle\sum_{i=0}^{tN-1} d^2(x_{k+i},\sigma_N) +\frac{1}{n}\displaystyle\sum_{i=tN}^{tN+r-1} d^2(x_{k+i},\sigma_N) \nonumber \\
&&-\frac{1}{n}\displaystyle\sum_{i=0}^{tN-1} d^2(x_{k+i},\sigma_n^k)- \frac{1}{n}\displaystyle\sum_{i=tN}^{tN+r-1} d^2(x_{k+i},\sigma_n^k) \nonumber \\
&=& \frac{t}{n}\displaystyle\sum_{i=0}^{N-1} d^2(x_{k+i},\sigma_N) +\frac{1}{n}\displaystyle\sum_{i=tN}^{tN+r-1} d^2(x_{k+i},\sigma_N) \nonumber \\
&&-\frac{t}{n}\displaystyle\sum_{i=0}^{N-1} d^2(x_{k+i},\sigma_n^k)- \frac{1}{n}\displaystyle\sum_{i=tN}^{tN+r-1} d^2(x_{k+i},\sigma_n^k) \nonumber \\
&\leqslant &\frac{t}{n}\displaystyle\sum_{i=k}^{k+N-1} d^2(x_i,\sigma_N)-\frac{t}{n}\displaystyle\sum_{i=k}^{k+N-1} d^2(x_i,\sigma_n^k)+\frac{1}{n}\displaystyle\sum_{i=tN}^{tN+r-1} d^2(x_{k+i},\sigma_N) \nonumber\\
&=&\frac{t}{n}\Big(\displaystyle\sum_{i=0}^{N-1} d^2(x_i,\sigma_N)+\displaystyle\sum_{i=N}^{k+N-1} d^2(x_i,\sigma_N)-\displaystyle\sum_{i=0}^{k-1} d^2(x_i,\sigma_N) \Big)\nonumber\\
&&+\frac{t}{n}\Big(-\displaystyle\sum_{i=0}^{N-1} d^2(x_i,\sigma_n^k)+\displaystyle\sum_{i=0}^{k-1} d^2(x_i,\sigma_n^k)-\displaystyle\sum_{i=N}^{k+N-1} d^2(x_i,\sigma_n^k)\Big)\nonumber\\
&&+\frac{1}{n}\displaystyle\sum_{i=tN}^{tN+r-1} d^2(x_{k+i},\sigma_N) \nonumber \\
&\overset{\star}{=}&\frac{t}{n}\displaystyle\sum_{i=0}^{N-1} d^2(x_i,\sigma_N)-\frac{t}{n}\displaystyle\sum_{i=0}^{N-1} d^2(x_i,\sigma_n^k)+\frac{1}{n}\displaystyle\sum_{i=tN}^{tN+r-1} d^2(x_{k+i},\sigma_N) \nonumber \\
&\overset{\star\star}{\leqslant}&\frac{t}{n}\displaystyle\sum_{i=0}^{N-1} d^2(x_i,\sigma_n^k)-\frac{t}{n}\displaystyle\sum_{i=0}^{N-1} d^2(x_i,\sigma_n^k)+\frac{1}{n}\displaystyle\sum_{i=tN}^{tN+r-1} d^2(x_{k+i},\sigma_N) \nonumber \\
&\leqslant &\displaystyle\frac{r}{n}\sup_{tN\leqslant i\leqslant tN+r-1} d^2(x_{k+i},\sigma_N),
\end{eqnarray} 
hence
\begin{equation*}
\displaystyle\sup_{k\geqslant 0} d^2\big( \sigma_n^k ,\sigma_N\big) \leqslant \displaystyle\frac{r}{n}\sup_{k\geqslant 0}\sup_{tN\leqslant i\leqslant tN+r-1} d^2(x_{k+i},\sigma_N).
\end{equation*}
Now letting $n\rightarrow\infty$ completes the proof.
\end{proof}
\bibliographystyle{amsplain}  
\begin{filecontents}{shortbib.bib}
@spmpsci{url= "no", doi= "no"}
@article {AOYAMA20072350,
    AUTHOR = {Aoyama, K. and Kimura, Y. and Takahashi, W. and
              Toyoda, Masashi},
     TITLE = {{Approximation of common fixed points of a countable family of
              nonexpansive mappings in a {B}anach space}},
   JOURNAL = {Nonlinear Anal.},
  FJOURNAL = {Nonlinear Analysis. Theory, Methods \& Applications. An
              International Multidisciplinary Journal},
    VOLUME = {67},
      YEAR = {2007},
    NUMBER = {8},
     PAGES = {2350--2360},
      ISSN = {0362-546X},
   MRCLASS = {47H10 (41A65 47H09)},
  MRNUMBER = {2338104},
MRREVIEWER = {Liaqat Ali Khan},
       DOI = {10.1016/j.na.2006.08.032},
}
@book{bacak2014convex,
  title={{Convex Analysis and Optimization in Hadamard Spaces}},
  author={Bacak, M.},
  isbn="9783110361629",
  lccn="2014029958",
  series="De Gruyter Series in Nonlinear Analysis and Applications",
  year="2014",
  publisher="De Gruyter",
}
@article {CHAOHA2006983,
    AUTHOR = {Chaoha, P. and Phon-on, A.},
     TITLE = {{A note on fixed point sets in {CAT}(0) spaces}},
   JOURNAL = {J. Math. Anal. Appl.},
  FJOURNAL = {Journal of Mathematical Analysis and Applications},
    VOLUME = {320},
      YEAR = {2006},
    NUMBER = {2},
     PAGES = {983--987},
      ISSN = {0022-247X},
   MRCLASS = {54H25},
  MRNUMBER = {2226009},
       DOI = {10.1016/j.jmaa.2005.08.006},
}
@book{bridson2011metric,
  title={{Metric Spaces of Nonpositive Curvature}},
  author="Bridson, M. R. and Hafliger, A.",
  isbn="9783540643241",
  lccn="99038163",
  series="Grundlehren der mathematischen Wissenschaften",
  year="2011",
  publisher="Springer Berlin Heidelberg",
}
@article{Kuo2009,
author={Kuo, M. K.},
title={{Tauberian conditions for almost convergence}},
journal="Positivity",
year="2009",
month="Nov",
day="01",
volume="13",
number="4",
pages="611--619",
issn="1572-9281",
doi="10.1007/s11117-008-2282-z",
}
@article{lorentz1948,
author = {Lorentz, G. G.},
doi = "10.1007/BF02393648",
fjournal = "Acta Mathematica",
journal = "Acta Math.",
pages = "167--190",
publisher = "Institut Mittag-Leffler",
title = {{A contribution to the theory of divergent sequences}},
volume = "80",
year = "1948",
}
@article {Butzer1993,
    AUTHOR = {Butzer, P. L. and Nessel, R. J.},
     TITLE = {{Aspects of de {L}a {V}all\'{e}e-{P}oussin's work in approximation
              and its influence}},
   JOURNAL = {Arch. Hist. Exact Sci.},
  FJOURNAL = {Archive for History of Exact Sciences},
    VOLUME = {46},
      YEAR = {1993},
    NUMBER = {1},
     PAGES = {67--95},
      ISSN = {0003-9519},
   MRCLASS = {01A60 (01A55 41-03)},
  MRNUMBER = {1235866},
MRREVIEWER = {Jesper L\"{u}tzen},
       DOI = {10.1007/BF00387727},
}
@book{papadopoulos2005,
  title={{Metric Spaces, Convexity and Nonpositive Curvature}},
  author={Papadopoulos, A.},
  isbn={9783037190104},
  lccn={2006355594},
  series={IRMA lectures in mathematics and theoretical physics},
  year={2005},
  publisher={European Mathematical Society},
}
@incollection {Liimatainen2012,
    AUTHOR = {Liimatainen, T.},
     TITLE = {{Optimal riemannian metric for a volumorphism and a mean
              ergodic theorem in complete global Alexandrov nonpositively
              curved spaces}},
 BOOKTITLE = {Analysis, geometry and quantum field theory},
    SERIES = {Contemp. Math.},
    VOLUME = {584},
     PAGES = {163--178},
 PUBLISHER = {Amer. Math. Soc., Providence, RI},
      YEAR = {2012},
   MRCLASS = {53C23 (37A30 58C30 58D15 58D17)},
  MRNUMBER = {3013044},
MRREVIEWER = {Fernando Galaz-Garc\'\i a},
       DOI = {10.1090/conm/584/11593},
}
@book{danamir2013,
  title={{Characterizations of Inner Product Spaces}},
  author={Amir, D.},
  isbn={9783034854870},
  lccn={86018807},
  series={Operator Theory: Advances and Applications},
  year={2013},
  publisher={Birkh{\"a}user Basel},
}
@article {karcher1977,
    AUTHOR = {Karcher, H.},
     TITLE = {{Riemannian center of mass and mollifier smoothing}},
   JOURNAL = {Comm. Pure Appl. Math.},
  FJOURNAL = {Communications on Pure and Applied Mathematics},
    VOLUME = {30},
      YEAR = {1977},
    NUMBER = {5},
     PAGES = {509--541},
      ISSN = {0010-3640},
   MRCLASS = {58E10 (53C20)},
  MRNUMBER = {0442975},
MRREVIEWER = {S. Takizawa},
       DOI = {10.1002/cpa.3160300502},
}
@incollection {Sturm2003,
    AUTHOR = {Sturm, K. T.},
     TITLE = {{Probability measures on metric spaces of nonpositive
              curvature}},
 BOOKTITLE = {Heat kernels and analysis on manifolds, graphs, and metric
              spaces ({P}aris, 2002)},
    SERIES = {Contemp. Math.},
    VOLUME = {338},
     PAGES = {357--390},
 PUBLISHER = {Amer. Math. Soc., Providence, RI},
      YEAR = {2003},
   MRCLASS = {60B05 (28C15 28C99 53C21)},
  MRNUMBER = {2039961},
MRREVIEWER = {Vladimir I. Bogachev},
       DOI = {10.1090/conm/338/06080},
}
@book{barbu2012,
  title={{Convexity and Optimization in Banach Spaces}},
  author={Barbu, V. and Precupanu, T.},
  isbn={9789400722477},
  lccn={2011942142},
  series={Springer Monographs in Mathematics},
  year={2012},
  publisher={Springer Netherlands},
}
@article {kirk20083689,
    AUTHOR = {Kirk, W. A. and Panyanak, B.},
     TITLE = {{A concept of convergence in geodesic spaces}},
   JOURNAL = {Nonlinear Anal.},
  FJOURNAL = {Nonlinear Analysis. Theory, Methods \& Applications. An
              International Multidisciplinary Journal},
    VOLUME = {68},
      YEAR = {2008},
    NUMBER = {12},
     PAGES = {3689--3696},
      ISSN = {0362-546X},
   MRCLASS = {54H25 (47H10 54E40)},
  MRNUMBER = {2416076},
MRREVIEWER = {Jes\'{u}s Garc\'{i}a-Falset},
       DOI = {10.1016/j.na.2007.04.011},
}
@article {DHOMPONGSA20082572,
    AUTHOR = {Dhompongsa, S. and Panyanak, B.},
     TITLE = {{On {$\Delta$}-convergence theorems in {${\rm CAT}(0)$} spaces}},
   JOURNAL = {Comput. Math. Appl.},
  FJOURNAL = {Computers \& Mathematics with Applications. An International
              Journal},
    VOLUME = {56},
      YEAR = {2008},
    NUMBER = {10},
     PAGES = {2572--2579},
      ISSN = {0898-1221},
   MRCLASS = {47H10 (47H09 47J25 53C23 65J15)},
MRREVIEWER = {Jos\'{e} A. Ezquerro},
       DOI = {10.1016/j.camwa.2008.05.036},
}
@article {lim1976,
    AUTHOR = {Lim, T. C.},
     TITLE = {{Remarks on some fixed point theorems}},
   JOURNAL = {Proc. Amer. Math. Soc.},
  FJOURNAL = {Proceedings of the American Mathematical Society},
    VOLUME = {60},
      YEAR = {1976},
     PAGES = {179--182},
      ISSN = {0002-9939},
   MRCLASS = {47H10 (54H25)},
  MRNUMBER = {0423139},
MRREVIEWER = {M. Edelstein},
       DOI = {10.2307/2041136},
}
@article {kakavandi2013,
    AUTHOR = {Ahmadi Kakavandi, B.},
     TITLE = {{Weak topologies in complete {$CAT(0)$} metric spaces}},
   JOURNAL = {Proc. Amer. Math. Soc.},
  FJOURNAL = {Proceedings of the American Mathematical Society},
    VOLUME = {141},
      YEAR = {2013},
    NUMBER = {3},
     PAGES = {1029--1039},
      ISSN = {0002-9939},
   MRCLASS = {53C23 (54A20 54E35)},
  MRNUMBER = {3003694},
MRREVIEWER = {Huse Fatki\'{c}},
       DOI = {10.1090/S0002-9939-2012-11743-5},
}
@article {espinola2009,
    AUTHOR = {Esp\'{i}nola, R. and Fern\'{a}ndez-Le\'{o}n, A.},
     TITLE = {{${\rm CAT}(k)$}-spaces, weak convergence and fixed points},
   JOURNAL = {J. Math. Anal. Appl.},
  FJOURNAL = {Journal of Mathematical Analysis and Applications},
    VOLUME = {353},
      YEAR = {2009},
    NUMBER = {1},
     PAGES = {410--427},
      ISSN = {0022-247X},
   MRCLASS = {47H10 (55M20)},
  MRNUMBER = {2508878},
MRREVIEWER = {Sompong Dhompongsa},
       DOI = {10.1016/j.jmaa.2008.12.015},
}
@article{Berg2008,
 AUTHOR = {Berg, I. D. and Nikolaev, I. G.},
     TITLE = {{Quasilinearization and curvature of {A}leksandrov spaces}},
   JOURNAL = {Geom. Dedicata},
  FJOURNAL = {Geometriae Dedicata},
    VOLUME = {133},
      YEAR = {2008},
     PAGES = {195--218},
      ISSN = {0046-5755},
   MRCLASS = {53C45 (51K10)},
  MRNUMBER = {2390077},
MRREVIEWER = {Koichi Nagano},
       DOI = {10.1007/s10711-008-9243-3},
}
@book{bohr1947almost,
  title={{Almost Periodic Functions}},
  author={Bohr, H. A. and Cohn, H.},
  lccn={47005500},
  series={AMS Chelsea Publishing Series},
  year={1947},
  publisher={Chelsea Publishing Company}
}
@article{khatibzadehpouladi2019,
 AUTHOR = {Khatibzadeh, H. and Pouladi, H.},
     TITLE = {{Almost periodicity and ergodic theorems for nonexpansive mappings and semigroups in Hadamard spaces}},
   JOURNAL = {https://arxiv.org/abs/1903.00629},
    VOLUME = {},
      YEAR = {2019}
}
@article{dehghan2012,
author = {Dehghan, H. and Rooin, J.},
title = {{A characterization of metric projection in CAT(0) spaces}},
journal = {https://arxiv.org/abs/1311.4174},
year={2012},
pages ={},
isbn={},
}
\end{filecontents}
\bibliography{shortbib}
\end{document}